\newtheorem{theorem}{Theorem}[section]
\newtheorem{lemma}[theorem]{Lemma}
\newtheorem{proposition}[theorem]{Proposition}
\newtheorem{corollary}[theorem]{Corollary}
\newtheorem{definition}[theorem]{Definition}
\begin{document}
	
\title{Absolute compatibility and poincar\'{e} sphere}
\author{Anil Kumar Karn}
	
\address{School of Mathematical Sciences, National Institute of Science Education and Research, HBNI, Bhubaneswar, P.O. - Jatni, District - Khurda, Odisha - 752050, India.}

\email{\textcolor[rgb]{0.00,0.00,0.84}{anilkarn@niser.ac.in}}
	
	
\subjclass[2010]{Primary 46L05; Secondary 46B40, 46B42.}
	
\keywords{von Neumann algebra, absolute compatibility, strict projection, Poincar\'{e} sphere}
	
\begin{abstract}
	In this paper, we introduce the notion of strict projections and prove that an absolutely compatible pair of strict elements in a von Neumann algebra $\mathcal{M}$ unitarily equivalent to the elements $ \left((p - x_0) \otimes I_2 \right) P_0 + (x_0 \otimes I_2) P$, $\left((p - x_0) \otimes I_2 \right) P_0 + (x_0 \otimes I_2) P'$ of $M_2(\mathcal{M}_0)$ where $\mathcal{M}_0$ is an abelian von Neumann algebra, $x_0$ is a strict element of $\mathcal{M}_0^+$, $P_0 = \begin{bmatrix} 0 & 0 \\ 0 & 1 \end{bmatrix} \in M_2(\mathcal{M}_0)$ and $P$ is a strict projection in $M_2(\mathcal{M}_0)$. We also discuss the geometric form of this representation when $\mathcal{M} = \mathbb{M}_2$.
\end{abstract}

\maketitle 

\section{Introduction} 
Let $A$ be a C$^*$-algebra. A pair of elements $a, b \in A$ is said to be orthogonal, if $a^* b = 0 = a b^*.$ Orthogonal pair of positive elements play an important role in the theory of C$^*$-algebras. For example, it follows from the functional calculus that every self-adjoint element $a \in A_{sa}$ has a unique decomposition: $a = a^+ - a^-$ in $A^+$, where $a^+$ is algebraically orthogonal to $a^-$. Recently, the author obtained an order theoretic characterization of algebraic orthogonality among positive elements of a C$^*$-algebra \cite{K16, K18}. 

Orthogonal pairs of positive elements of norm less than or equal to $1$ exhibit an interesting property. Let $A$ be a unital C$^*$-algebra with unity $1_A$. For a pair of elements $0 \le a, b \le 1_A$ in $A$, we have $a b = 0$ ($a$ is orthogonal to $b$) if and only if $a + b \le 1_A$ and $\vert a - b \vert + \vert 1_A - a - b \vert = 1_A$ \cite[Proposition 4.1]{K18}. We isolate the later part and propose the following definition: A pair of elements $0 \le a, b \le 1_A$ in $A$ is said to be \emph{absolutely compatible}, (\cite{K18}), if $$\vert a - b \vert + \vert 1_A - a - b \vert = 1_A.$$ 
The notion of absolute compatibility was introduced as an instrument to produce a spectral decomposition theorem in the context of `absolute order unit spaces' \cite{K18}. It is further interesting to note that a projection $p$ in a C$^*$-algebra $A$ is absolutely compatible with a positive element $a$ of $A$ with $\Vert a \Vert \le 1$ if, and only if, $a p = p a$ \cite[Proposition 4.8]{K18}. Therefore, the notion of absolute compatibility appears to be interesting on its own. Keeping this point of view, the author, along with Jana and Peralta, initiated a study of absolute compatibility in operator algebras \cite{JKP, JKP1}. 

Let $\mathcal{M}$ be a von Neumann algebra and let $0 \le a \le 1_{\mathcal{M}}$. We write 
$$s(a) := \sup \lbrace p \in \mathcal{P}(\mathcal{M}): p \le a \rbrace$$ 
and
$$n(a) := \sup \lbrace p \in \mathcal{P}(\mathcal{M}): p a = 0 \rbrace$$
where $\mathcal{P}(\mathcal{M})$ denotes the set of all projections in $\mathcal{M}$. We also recall that the \emph{range projection} $r(a)$ of $a \in \mathcal{M}^+$ is defined as 
$$r(a) := \inf \lbrace p \in \mathcal{P}(\mathcal{M}): a \le \Vert a \Vert p \rbrace = \inf \lbrace p \in \mathcal{P}(\mathcal{M}): a p = a \rbrace.$$
For $0 \le a \le 1_{\mathcal{M}}$, we say that $a$ is \emph{strict} in $\mathcal{M}$, if $s(a) = 0$ and $n(a) = 0$ (that is, $r(a) = 1_{\mathcal{M}}$). Note that if $a$ is strict, then so is $1 - a$. This notion was studied in \cite{JKP1, K20}. 

In this paper, we extend this notion to a general element $a \in \mathcal{M}$. We say that $a$ is strict, if $\vert a \vert$ is strict in the above said sense. Note that in this case, we can find a unitary $u \in \mathcal{M}$ such that $a = u \vert a \vert$.

In \cite{JKP}, it was proved that an absolutely compatible pair of positive elements in a von Neumann algebra has a (matricial) decomposition as a direct sum of commuting and `strict' components. Let $0 \le a, b \le 1_{\mathcal{M}}$ such that $a$ is absolutely compatible with $b$. Then there exist mutually orthogonal projections $p_1, p_2, s, n_1, n_2 \in \mathcal{P}(\mathcal{M})$ with $p_1 + p_2 + s + n_1 + n_2 = 1_{\mathcal{M}}$ such that  
$$a = p_1 \oplus a_1 \oplus a_2 \oplus 0 \oplus a_3$$
and 
$$b = b_1 \oplus p_2 \oplus b_2 \oplus b_3 \oplus 0$$
with respect to $\{ p_1, p_2, s, n_1, n_2 \}$ where $a_2$ and $b_2$ are strict and absolutely compatible in $s\mathcal{M}s$ \cite[Theorem 2.10]{JKP}. Thus the study of absolutely compatible pair of elements reduces to such strict pairs. 

Let $p$ and $q$ be any two projections in a von Neumann algebra $\mathcal{M}$ and let $\lambda$ be a real number with $0 \le \lambda \le 1$. It is easy to show that $a := (1 - \lambda) p + \lambda q$ is absolutely compatible with $b := (1 - \lambda) p + \lambda q'$ where $q' = 1 - q$. In this paper we show that any absolutely compatible pair of strict elements in $\mathcal{M}$ takes this form for a suitable pair of projections $p$ and $q$. However, we note that if $p \wedge q$ is non-zero, then $a$ is not strict. Similarly, $\lambda = 0$ or $\lambda = 1$ reduces these elements to projections. We find a specific form for $p$ and $q$ to prove strictness of $a$ and $b$.

In \cite{K20}, we proved the following description of an absolutely compatible pair of strict elements in a von Neumann algebra. 
\begin{theorem}\label{1}
	Let $\mathcal{M}$ be a von Neumann algebra and assume that $a, b \in [0, 1]_{\mathcal{M}}$ be strict and commuting pair such that $a^2 + b^2 \le 1$ with $a^2 + b^2$ strict. Put $a_1 = \begin{bmatrix} a^2 & a b \\ a b & 1 - a^2 \end{bmatrix}$ and $b_1 = \begin{bmatrix} b^2 & - a b \\ - a b & 1 - b^2 \end{bmatrix}$. Then $a_1, b_1 \in [0, 1]_{M_2(\mathcal{M})}$ and $a_1$ is absolutely compatible with $b_1$.
\end{theorem}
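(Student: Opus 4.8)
The plan is to work entirely inside the abelian C$^*$-subalgebra of $\mathcal{M}$ generated by $a$, $b$ and $1$; since $a$ and $b$ commute and are self-adjoint, every entry appearing in $a_1$, $b_1$ and their complements lies in this commutative algebra, so I may manipulate the entries as commuting elements and compute absolute values via the functional calculus. The strictness hypotheses will play no role in this direction of the argument.

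First I would establish the order bounds $0 \le a_1, b_1 \le 1$ by exhibiting each of $a_1$, $1 - a_1$, $b_1$, $1 - b_1$ as a sum of two manifestly positive terms. Writing $v = \begin{bmatrix} a \\ b \end{bmatrix}$ and $w = \begin{bmatrix} b \\ -a \end{bmatrix}$, one has
$$a_1 = v v^* + \begin{bmatrix} 0 & 0 \\ 0 & 1 - a^2 - b^2 \end{bmatrix}, \qquad 1 - a_1 = w w^* + \begin{bmatrix} 1 - a^2 - b^2 & 0 \\ 0 & 0 \end{bmatrix},$$
and the analogous pair of identities (with the roles of the two diagonal corrections interchanged) holds for $b_1$ and $1 - b_1$. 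Each summand is positive because $vv^*$ and $ww^*$ are positive and $a^2 + b^2 \le 1$; this step uses $ab = ba$ to identify the rank-one products with the off-diagonal entries.

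The heart of the argument is the computation of the two absolute values in the defining identity for absolute compatibility. I would first record that
$$a_1 - b_1 = \begin{bmatrix} a^2 - b^2 & 2ab \\ 2ab & b^2 - a^2 \end{bmatrix},$$
a self-adjoint matrix of the form $\begin{bmatrix} t & s \\ s & -t \end{bmatrix}$ with $t = a^2 - b^2$, $s = 2ab$. The crucial observation is that, by commutativity, such a matrix squares to the scalar matrix $(t^2 + s^2)\, I_2$, and here
$$t^2 + s^2 = (a^2 - b^2)^2 + 4 a^2 b^2 = (a^2 + b^2)^2 .$$
Taking the positive square root gives $\lvert a_1 - b_1 \rvert = (a^2 + b^2)\, I_2$. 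Separately, $1 - a_1 - b_1$ is the diagonal matrix with entries $1 - a^2 - b^2$ and $-(1 - a^2 - b^2)$, so its absolute value is computed entrywise, and the hypothesis $a^2 + b^2 \le 1$ makes both diagonal entries equal to $1 - a^2 - b^2$; thus $\lvert 1 - a_1 - b_1 \rvert = (1 - a^2 - b^2)\, I_2$. Adding the two yields $\lvert a_1 - b_1 \rvert + \lvert 1 - a_1 - b_1 \rvert = I_2 = 1_{M_2(\mathcal{M})}$, which is precisely absolute compatibility.

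The only genuinely delicate step is evaluating $\lvert a_1 - b_1 \rvert$, since $a_1 - b_1$ is a non-diagonal $2 \times 2$ self-adjoint matrix whose absolute value is not obtained entrywise; the anti-symmetric structure, which forces the square to be a scalar multiple of $I_2$, is exactly what makes this tractable and produces the clean value $(a^2 + b^2)\, I_2$. I note that strictness of $a$, $b$ and of $a^2 + b^2$ is not needed for the conclusions stated here—only $a^2 + b^2 \le 1$ and commutativity are used—so I expect those hypotheses to be relevant to the converse representation rather than to this direction.
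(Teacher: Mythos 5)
Your proof is correct, and it is essentially the approach this line of work takes: the paper itself never proves Theorem \ref{1} (it is imported from \cite{K20}), but its proof of the matrix-form analogue, Theorem \ref{9}, rests on exactly the same two computations --- $\vert A - B\vert$ is obtained by multiplying a central positive element by a symmetry $P - P'$, which is precisely your observation that $(a_1 - b_1)^2 = (a^2+b^2)^2 I_2$, and $\vert I_2 - A - B\vert$ is read off a diagonal factorization through the symmetry $I_2 - 2P_0$, matching your treatment of $1 - a_1 - b_1$. Your closing remark is also accurate: strictness of $a$, $b$ and $a^2+b^2$ plays no role in this direction and serves only to make $a_1$, $b_1$ strict, which is what the converse (Theorem \ref{2}) requires.
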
 
Conversely, we have 
\begin{theorem}\label{2}
	Let $\mathcal{M}$ be a von Neumann algebra with the underlying Hilbert space $H$. Assume that $a, b \in [0, 1]_{\mathcal{M}}$ be a strict and absolutely compatible pair. Put $p = 1 - r(a \circ b)$. 
	\begin{enumerate}
		\item Then $p H$ is isometrically isomorphic to $(1 - p) H$. In particular, $H \equiv K \oplus K$, where $K = p H$. 
		\item There exist strict elements $a_1, b_1 \in [0, p] \cap \mathcal{M}$ with $a_1 b_1 = b_1 a_1$, $a_1 + b_1 \le p$ together with $a_1 + b_1$ strict in $p \mathcal{M} p$; and a unitary $U: H \to K \oplus K$ such that 
		$$a = U^* \begin{bmatrix} a_1 & (a_1 b_1)^{\frac{1}{2}} \\ (a_1 b_1)^{\frac{1}{2}} & p - a_1 \end{bmatrix} U \ \textrm{and} \ b = U^* \begin{bmatrix} b_1 & - (a_1 b_1)^{\frac{1}{2}} \\ - (a_1 b_1)^{\frac{1}{2}} & p - b_1 \end{bmatrix} U.$$
	\end{enumerate}
\end{theorem}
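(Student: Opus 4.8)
The plan is to convert absolute compatibility into a single clean operator identity, extract from it the decisive commutation relations, and only then build the unitary $U$ and read off the matrix forms.

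First I would rewrite the hypothesis $\vert a-b\vert + \vert 1-a-b\vert = 1$ as $\vert a-b\vert = 1 - \vert a+b-1\vert$ and set $e := \vert a-b\vert$, $x := a-b$, $y := a+b-1$. The point of this reformulation is that one equation says two things at once: $e$ is the modulus of $x$, and $e = 1 - \vert y\vert$ is a continuous function of $y$. Squaring $\vert 1-a-b\vert = 1-e$ and cancelling (legitimate since $e\le 1$ and positive square roots are unique) I would record the equivalent polynomial identity $ab+ba = (a+b) - \vert a-b\vert = y + \vert y\vert = 2y^+$, that is $a\circ b = (a+b-1)^+$. In particular $a\circ b\ge 0$. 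Strictness forces $\ker y = 0$ (a nonzero vector there would, via $\vert a-b\vert = 1$, yield a joint eigenvector with $a,b\in\{0,1\}$, contradicting $s(a)=n(a)=s(b)=n(b)=0$), so $1 = r(y^+)+r(y^-)$ and hence $p = 1 - r(a\circ b) = r(y^-) = \tfrac12(1-\mathrm{sgn}\,y)$ is the negative spectral projection of $a+b-1$. Writing $p' = 1-p = r(y^+)$, both commute with $a+b$ and with $e = 1-\vert y\vert$.

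The crux is a commutation observation. Since $e=\vert x\vert$ it commutes with $x$; since $e = 1-\vert y\vert$ it commutes with the spectral projections $p,p'$ of $y$. Hence $pep = ep = pe$ and $g := pxp$ both leave $pH$ invariant and commute there. Setting $a_1 := pap$, $b_1 := pbp$ in $p\mathcal{M}p$, the vanishing of the off-diagonal of $a+b$ (namely $p(a+b)p' = 0$, as $p,p'$ are spectral projections of $a+b$) gives $a_1+b_1 = pep$ and $a_1-b_1 = g$, so $a_1$ and $b_1$ commute. Strictness transfers to the corners: from $n(a)=s(a)=0$ one gets $r(a_1)=p$, $s(a_1)=0$, similarly for $b_1$, and $a_1+b_1 = pep$ is strict in $p\mathcal{M}p$ because $\ker e = 0$ and $e\le 1$. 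For commuting positive elements $r(a_1b_1) = r(a_1)\wedge r(b_1) = p$, so $a_1 b_1$ is strict and I may set $c := (a_1b_1)^{1/2}$.

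Next I construct $U$ and prove part (1). Put $\mu := pap' = \tfrac12\,pxp'$; from $p(a+b)p'=0$ one also gets $pbp' = -\mu$. Expanding $p(ab+ba)p = 0$ (valid since $a\circ b$ annihilates $pH$) yields $\mu\mu^* = a_1b_1$, so $r(\mu\mu^*) = p$; applying the same analysis to the pair $(1-a,1-b)$ — again strict and absolutely compatible, with the roles of $p$ and $p'$ interchanged since $(1-a)\circ(1-b) = y^-$ — gives $r(\mu^*\mu) = p'$. Thus the partial isometry $W$ in the polar decomposition $\mu = c\,W$ satisfies $W^*W = p'$, $WW^* = p$, so $p\sim p'$ and $H\equiv K\oplus K$ with $K = pH$; this is part (1), and $U := \begin{bmatrix} p \\ W\end{bmatrix}\colon H\to K\oplus K$ is unitary. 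Finally I compute $UaU^* = \begin{bmatrix} pap & paW^* \\ Wap & WaW^*\end{bmatrix}$: the $(1,1)$ entry is $a_1$; the off-diagonal entry is $paW^* = \mu W^* = c = (a_1b_1)^{1/2}$ (self-adjoint and positive) with its adjoint below; and $WaW^* = p - a_1$ follows from $WeW^* = pep = a_1+b_1$ (as $e$ commutes with $W$) together with $WxW^* = -g$ (obtained from the off-diagonal block of $x^2 = e^2$, which gives $\mu g' = -g\mu$, and then cancelling the strict, hence injective, factor $c$). The identical computation for $b$ returns $b_1$, $-c$, and $p-b_1$, giving both displayed forms.

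The step I expect to be the main obstacle is the double-support statement $r(\mu\mu^*)=p$ and $r(\mu^*\mu)=p'$, i.e. that the off-diagonal corner $pap'$ is full on both sides; this is precisely what upgrades $W$ from a partial isometry to a unitary between $pH$ and $(1-p)H$, and it is exactly where strictness is indispensable. Everything upstream — the identity $a\circ b=(a+b-1)^+$ and the commutativity of $a_1,b_1$ — is robust, whereas pinning down the supports requires the self-dual bookkeeping between the pairs $(a,b)$ and $(1-a,1-b)$ and the transfer of $s(\cdot),n(\cdot)=0$ to the $2\times 2$ corners. I would therefore isolate the support statement as a separate lemma before assembling the block forms.
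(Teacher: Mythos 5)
Your proof is correct, but note one structural point: this paper never proves Theorem \ref{2} at all --- it is imported verbatim from \cite{K20} (Theorem 1.4 there), so there is no in-paper argument to compare against. What the paper does reveal of that proof (in the proof of Theorem \ref{8}: a halving partial isometry between $pH$ and $(1-p)H$ extracted from the pair, commuting corner elements, and the $2\times 2$ matrix form) matches your construction step for step, so your route --- the identity $a\circ b=(a+b-1)^+$, the spectral projection $p=r\left((a+b-1)^-\right)$, commutation of the compressions $pep$ and $pxp$, and the polar decomposition of the off-diagonal corner $\mu=pap'$ --- appears to be essentially the standard one rather than a genuinely new path.

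Three assertions you leave unproved should be written out, though none is a real obstruction. (i) $\ker e=0$: this follows by applying your own $\ker y=0$ argument to the pair $(a,1-b)$, which is again strict and absolutely compatible and whose associated ``$y$'' is exactly $a-b$. (ii) The transfer of strictness to the corners $a_1=pap$, $b_1=pbp$: both claims are routine equality-case arguments ($q\le pap$ with $q\le p$ forces $a\xi=\xi$ on $qH$, so $q$ sits under the spectral projection of $a$ at $1$, which vanishes since $s(a)=0$; and $qaq=0$ forces $aq=0$, so $q\le n(a)=0$). (iii) $eW=We$: you invoke this to get $WeW^*=pep$, but it needs a line --- since $e$ commutes with $x$, $p$ and $p'$, it commutes with $\mu=\frac12 pxp'$ and hence with $c$; then $ecW=cWe$ gives $c(eW-We)=0$, and the injectivity of $c$ on $pH$ (the same cancellation trick you use for $Wg'=-gW$) finishes it. With these patches the argument is complete and delivers both parts of the theorem.
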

In this paper, we apply Theorem \ref{2} to prove that if $a$ and $b$ are strict and absolutely compatible in a von Neumann algebra $\mathcal{M}$ and if $\mathcal{M}_1$ is the von Neumann subalgebra generated by $a$ and $b$ then there exists an abelian von Neumann subalgebra $\mathcal{M}_0$ of $\mathcal{M}$ such that $\mathcal{M}_1$ is unitally isomorphic to $M_2(\mathcal{M}_0)$ (Theorem \ref{8}). This result reduces our effort to work on absolutely compatible pair of strict elements in $M_2(\mathcal{M}_0)$ where $\mathcal{M}_0$ is an abelian von Neumann algebra. 

We introduce the notions of strict unitaries and strict projections in $M_2(\mathcal{M}_0)$ where $\mathcal{M}_0$ is an abelian von Neumann algebra and describe an absolutely compatible pair of strict element in a von Neumann algebra as a operator convex combinations of strict projections (up to suitable unitaries). More precisely, the main results of the paper are the following:
\begin{theorem}\label{9}
	Let $\mathcal{M}_0$ be an abelian von Neumann algebra and assume that $A, B \in M_2(\mathcal{M}_0)$ with $0 \le A, B \le I_2$ be such that 
	$$A = \left((1 - x_0) \otimes I_2 \right) P_0 + (x_0 \otimes I_2) P$$
	and 
	$$B = \left((1 - x_0) \otimes I_2 \right) P_0 + (x_0 \otimes I_2) P'$$ 
	for some strict element $x_0 \in \mathcal{M}_0^+$ and a strict projection $P$ in $M_2(\mathcal{M}_0)$. (Here $I_2 = \begin{bmatrix} 1 & 0 \\ 0 & 1 \end{bmatrix}$ is the unity of $M_2(\mathcal{M}_0)$, $P_0 = \begin{bmatrix} 0 & 0 \\ 0 & 1 \end{bmatrix}$ is a pivotal projection in $M_2(\mathcal{M}_0)$ and $P' = I_2 - P$.) Then $A$ and $B$ are strict elements of $M_2(\mathcal{M}_0)^+$ and $A$ is absolutely compatible with $B$.
\end{theorem}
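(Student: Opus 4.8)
The plan is to separate the two assertions and to exploit throughout that $t := x_0 \otimes I_2$, and hence also $(1-x_0)\otimes I_2 = I_2 - t$, are central positive elements of $M_2(\mathcal{M}_0)$ with trivial kernel (the latter because $x_0$ is strict, so $x_0$ carries no spectral mass at $\{0,1\}$). With $A = (I_2 - t)P_0 + tP$ and $B = (I_2 - t)P_0 + tP'$, the bounds $0 \le A, B \le I_2$ are immediate: each summand is a product of commuting positive elements, and $I_2 - A = (I_2 - t)(I_2 - P_0) + t(I_2 - P) \ge 0$ since $P_0, P \le I_2$. So the genuine content is the compatibility identity together with strictness.

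For absolute compatibility I would compute directly. Centrality of $t$ gives $A - B = t(2P - I_2)$ and $I_2 - A - B = (I_2 - t)(I_2 - 2P_0)$, where both $2P - I_2$ and $I_2 - 2P_0$ are symmetries (self-adjoint unitaries). Hence $(A-B)^*(A-B) = t^2(2P - I_2)^2 = t^2$ and likewise $(I_2 - A - B)^*(I_2 - A - B) = (I_2 - t)^2$, so that $\vert A - B \vert = t$ and $\vert I_2 - A - B \vert = I_2 - t$ (using $t, I_2 - t \ge 0$). Adding yields $\vert A - B \vert + \vert I_2 - A - B \vert = I_2$, which is exactly the statement that $A$ is absolutely compatible with $B$.

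For strictness I would pass to symmetries. Put $S_0 := 2P_0 - I_2$ and $S := 2P - I_2$, so that $A = \tfrac12(I_2 + R)$ with $R := (I_2 - t)S_0 + tS$ self-adjoint. Since $A \ge 0$, strictness of $A$ means $s(A) = \chi_{\{1\}}(A) = 0$ and $n(A) = \chi_{\{0\}}(A) = 0$, i.e. $R$ carries no spectral mass at $\pm 1$; equivalently the projection $f := \chi_{\{1\}}(R^2)$ is zero. Using $S_0^2 = S^2 = I_2$ and centrality of $t$ I obtain $R^2 = (I_2 - t)^2 + t^2 + t(I_2 - t)(S_0 S + S S_0)$. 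Feeding this into $R^2 f = f$ and cancelling the central factor $t(I_2 - t) = (x_0(1 - x_0))\otimes I_2$ — a positive central element with trivial kernel, hence injective as a left multiplier — reduces the relation to $(S_0 S + S S_0)f = 2f$. Writing $u := S_0 S$, a unitary with $u^* = S S_0$, this reads $(u + u^*)f = 2f$; since $2I_2 - u - u^* = (I_2 - u)(I_2 - u)^*$, it forces $(I_2 - u)^* f = 0$, whence $uf = f$, then $Sf = S_0 f$, i.e. $(P - P_0)f = 0$.

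Finally I would invoke the elementary two-projection identity $\ker(P - P_0) = (P \wedge P_0) \oplus (P' \wedge P_0')$, with $P_0' := I_2 - P_0$; thus $(P - P_0)f = 0$ places $\ran(f)$ inside $(P \wedge P_0)\oplus(P' \wedge P_0')$, which is $\{0\}$ because a strict projection $P$ is in generic position relative to the pivotal projection $P_0$, so each of $P \wedge P_0$, $P' \wedge P_0'$, $P \wedge P_0'$, $P' \wedge P_0$ vanishes. Hence $f = 0$ and $A$ is strict. For $B$ the computation is identical with $P$ replaced by $P' = I_2 - P$: its symmetry is $2P' - I_2 = -S$, the anticommutator term changes sign, and the argument ends at $(P - P_0')f' = 0$, whose kernel $(P \wedge P_0')\oplus(P' \wedge P_0)$ is again $\{0\}$ by strictness of $P$. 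I expect the only delicate points to be the cancellation of the non-invertible (but injective) central factor $t(I_2 - t)$ and the passage from $(u + u^*)f = 2f$ to $uf = f$; once the definition of a strict projection is unwound into the vanishing of the four corner meets, everything else is a direct computation or routine bookkeeping.
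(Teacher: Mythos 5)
Your proof is correct, and it splits naturally into a half that coincides with the paper and a half that is genuinely different. The absolute-compatibility computation is exactly the paper's: both of you factor $A - B$ and $I_2 - A - B$ as a positive central element times a symmetry and read off $\vert A - B\vert = x_0 \otimes I_2$, $\vert I_2 - A - B\vert = (1 - x_0)\otimes I_2$. For strictness, however, the paper argues entrywise in the abelian algebra $\mathcal{M}_0$: given a projection $Q = \begin{bmatrix} q_1 & q \\ q^* & q_2 \end{bmatrix}$ with $Q \le A$, it shows $q_0 := q_1 + q_2$ is a projection, so that $Q_0 := q_0 \otimes I_2$ is central with $Q \le Q_0 A$, and then successively forces $q_1 = x_0 q_0 p_1$, $q = x_0 q_0 p$, $q_2 = x_0 q_0 p_2$, whence $q_0 = x_0 q_0$ and $q_0 = 0$ by strictness of $x_0$; the condition $n(A) = 0$ is then obtained by running the same argument on $R' \le \left((1 - x_0)\otimes I_2\right)P_0' + (x_0 \otimes I_2)P'$ for a projection $R \ge A$. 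You instead capture $s(A) = 0$ and $n(A) = 0$ simultaneously in the single spectral projection $f = \chi_{\{1\}}(R^2)$, cancel the central element $t(I_2 - t)$, and land in two-projection geometry: $(P - P_0)f = 0$, finished by the kernel decomposition $\ker(P - P_0) = \ran(P \wedge P_0) \oplus \ran(P' \wedge P_0')$ (Halmos' two-subspaces picture, which the paper cites but never actually uses) and the vanishing of the corner meets. The two steps you flag as delicate are both sound: a positive central element whose kernel is trivial is injective as an operator, so the cancellation is legitimate; and the corner meets do vanish by routine unwinding of the definition of a strict projection (a vector $(0,\xi)$ in $\ran(P \wedge P_0)$ satisfies $p_1 p_2 \xi = 0$ and $p_2\xi = \xi$, hence $p_1\xi = 0$ and $\xi = 0$ since $n(p_1) = 0$; a vector $(\xi,0)$ in $\ran(P \wedge P_0')$ satisfies $p_1\xi = \xi$, hence $\xi = 0$ since $s(p_1) = 0$; the remaining two meets vanish because $P'$ is also strict). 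What your route buys is a conceptual explanation — strictness of $A$ is precisely generic position of $P$ relative to the pivot $P_0$ together with injectivity of $x_0(1 - x_0)$ — plus a single argument covering both halves of strictness; what the paper's route buys is that it stays entirely within elementary matrix algebra over $\mathcal{M}_0$, never invoking spectral projections of $R^2$ or the two-subspace lemma.
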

\begin{theorem}\label{10}
	Let $\mathcal{M}$ be a von Neumann algebra with the underlying Hilbert space $H$. Assume that $a, b \in [0, 1]_{\mathcal{M}}$ be a strict and absolutely compatible pair and let $\mathcal{M}_1$ be the von Neumann subalgebra of $\mathcal{M}$ generated by $a$ and $b$. 
	\begin{enumerate}
		\item There exists a halving projection $p \in \mathcal{M}$ and a unitary $U_0: K \oplus K \to H$ such that $U_0^* \mathcal{M}_1 U_0$ is unitally $*$-isomorphic to $M_2(\mathcal{M}_0)$ for some abelian von Neumann algebra $\mathcal{M}_0$ acting on $K$ where $K = p H$. 
		\item There exists a strict projection $P$ in $M_2(\mathcal{M}_0)$ and a strict element $x_0$ in $\mathcal{M}_0^+$ such that 
		$$U_0^* a U_0 = \left((p - x_0) \otimes I_2 \right) P_0 + (x_0 \otimes I_2) P$$
		and 
		$$U_0^* b U_0 = \left((p - x_0) \otimes I_2 \right) P_0 + (x_0 \otimes I_2) P'$$
		where $I_2 = \begin{bmatrix} p & 0 \\ 0 & p \end{bmatrix}$ is the unity of $M_2(\mathcal{M}_0)$, $P_0 = \begin{bmatrix} 0 & 0 \\ 0 & p \end{bmatrix}$ is a projection in $M_2(\mathcal{M}_0)$ and $P' = I_2 - P$.
	\end{enumerate}
\end{theorem}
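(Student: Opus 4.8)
The plan is to read both statements off the matricial decomposition supplied by Theorem \ref{2}, and then to exhibit the projection $P$ and the element $x_0$ by an explicit bounded functional calculus inside the abelian algebra generated by the commuting pieces.

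First I would apply Theorem \ref{2} to the strict, absolutely compatible pair $a,b$. This produces the projection $p = 1 - r(a\circ b)$, which by part (1) of that theorem satisfies $pH \cong (1-p)H$ and is therefore halving; a unitary $U : H \to K \oplus K$ with $K = pH$; and commuting strict elements $a_1, b_1 \in [0,p]\cap\mathcal{M}$ with $a_1 + b_1 \le p$ and $a_1 + b_1$ strict in $p\mathcal{M}p$, such that
$$U a U^* = \begin{bmatrix} a_1 & c \\ c & p - a_1 \end{bmatrix}, \qquad U b U^* = \begin{bmatrix} b_1 & -c \\ -c & p - b_1 \end{bmatrix},$$
where $c = (a_1 b_1)^{\frac12}$. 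Setting $U_0 = U^*$ (so that $U_0^*\,\cdot\,U_0 = U\,\cdot\,U^*$) and letting $\mathcal{M}_0$ be the abelian von Neumann algebra generated by $a_1$ and $b_1$ on $K$, part (1) is precisely the content of Theorem \ref{8}: $U_0^* \mathcal{M}_1 U_0$ is unitally $*$-isomorphic to $M_2(\mathcal{M}_0)$.

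For part (2) I would put $x_0 = a_1 + b_1$, which is strict in $\mathcal{M}_0^+$ by construction. Since $x_0$ is strict, its range projection is $p$; representing $\mathcal{M}_0$ as $L^\infty(X,\mu)$ this says $x_0 > 0$ almost everywhere, so the quotients $a_1 x_0^{-1}$, $b_1 x_0^{-1}$ and $c\,x_0^{-1}$ are well defined bounded elements of $[0,p]\cap\mathcal{M}_0$. I would then set
$$P = \begin{bmatrix} a_1 x_0^{-1} & c\, x_0^{-1} \\ c\, x_0^{-1} & b_1 x_0^{-1} \end{bmatrix} \in M_2(\mathcal{M}_0),$$
the inverses always understood in this bounded (measurable functional calculus) sense. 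Writing $a_1 = \alpha^2$, $b_1 = \beta^2$ with $\alpha,\beta \ge 0$ commuting, one has $c = \alpha\beta$ and $P = x_0^{-1}\left[\begin{smallmatrix}\alpha\\\beta\end{smallmatrix}\right]\left[\begin{smallmatrix}\alpha&\beta\end{smallmatrix}\right]$, which makes transparent that $P$ is a fibrewise rank-one projection; alternatively a direct $2\times 2$ multiplication using $a_1 + b_1 = x_0$ gives $P = P^* = P^2$ together with $P_{11} + P_{22} = p$.

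It then remains to check that $P$ is \emph{strict} and to verify the two representation formulas, the former being the main point where the hypotheses are used. Strictness of the diagonal entries follows from strictness of $a_1,b_1$: for a projection $q \le p$ one has $a_1 x_0^{-1} q = 0$ only where $a_1 = 0$, i.e. $q \le n(a_1) = 0$, and $a_1 x_0^{-1} q = q$ only where $a_1 = x_0$, i.e. $b_1 = 0$, i.e. $q \le n(b_1) = 0$; hence $P_{11}$, and symmetrically $P_{22}$, carry no spectral projection at $0$ or $p$, which is exactly what makes $P$ strict. Finally, with $P' = I_2 - P = \left[\begin{smallmatrix} P_{22} & -P_{12}\\ -P_{12} & P_{11}\end{smallmatrix}\right]$ (using $P_{11}+P_{22}=p$) and $P_0 = \left[\begin{smallmatrix}0&0\\0&p\end{smallmatrix}\right]$, multiplying out and using the cancellations $x_0\,(a_1 x_0^{-1}) = a_1$, $x_0\,(c\,x_0^{-1}) = c$ and $(p - x_0) + x_0\,(b_1 x_0^{-1}) = p - a_1$ gives
$$\left((p - x_0)\otimes I_2\right)P_0 + (x_0 \otimes I_2)P = \begin{bmatrix} a_1 & c \\ c & p - a_1\end{bmatrix} = U_0^* a U_0,$$
while the analogous computation with $P'$, using $(p - x_0) + x_0\,(a_1 x_0^{-1}) = p - b_1$, recovers $U_0^* b U_0$. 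The one genuine obstacle is the care needed around $x_0^{-1}$: since $x_0$ is strict but typically not boundedly invertible, every quotient must be defined through the measurable functional calculus and the cancellations $x_0\cdot(a_1 x_0^{-1}) = a_1$ justified there rather than by naive multiplication; once this is set up, strictness of $P$ and the two identities are routine, and absolute compatibility of the resulting pair is consistent with the converse direction Theorem \ref{9}.
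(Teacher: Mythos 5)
Your proposal is correct, and it reaches the same normal form through the same skeleton (invoke Theorem \ref{2}, take $x_0 = a_1 + b_1$, split $U_0^* a U_0$ into $\left((p-x_0)\otimes I_2\right)P_0$ plus a remainder, and exhibit the remainder as $(x_0\otimes I_2)P$), but the construction of the strict projection $P$ --- which is the real content of part (2) --- is genuinely different from the paper's. The paper writes the remainder as $W^* P_0 W$ with $W = \begin{bmatrix} b_0 & -a_0 \\ a_0 & b_0\end{bmatrix}$ (where $a_1 = a_0^2$, $b_1 = b_0^2$), takes the polar decomposition $W = U\vert W\vert$, and spends most of its effort proving that the phase $U$ is a \emph{strict unitary}: positivity of $u_1, u_3$ via density of the range of $(a_0^2+b_0^2)^{1/4}$, then $s(u_1)=0$ and $r(u_1)=p$ from strictness of $a_0$, $b_0$, $a_0^2+b_0^2$; strictness of $P = U^* P_0 U$ then comes from Lemma \ref{6}, and the formula from centrality of $\vert W\vert^2 = x_0 \otimes I_2$. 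You instead divide by the central element $x_0$ directly, defining $P = (x_0^{-1}\otimes I_2)\begin{bmatrix} a_1 & c \\ c & b_1 \end{bmatrix}$ via the bounded quotients $a_1 x_0^{-1}$, $b_1 x_0^{-1}$, $c\,x_0^{-1}$ in the abelian picture, and verify projection-ness, strictness of the diagonal entry, and the two identities fibrewise; note that your $P$ coincides with the paper's, since $W^*P_0W = \vert W\vert^2 P = (x_0\otimes I_2)P$. Your route is more elementary and shorter, at the cost of the one technical point you correctly flag: $x_0^{-1}$ is unbounded, so the quotients and cancellations like $x_0\cdot(a_1x_0^{-1}) = a_1$ must be justified through measurable functional calculus or as SOT-limits of $a_1(x_0 + p/n)^{-1}$ (this also shows the entries lie in $\mathcal{M}_0$ itself, strictness of $x_0$ ensuring $x_0 x_0^{-1} = p$). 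What the paper's longer route buys is the strict unitary $U$ with $P = U^*P_0U$ realized by bounded elements throughout, which feeds directly into Lemma \ref{6} and the subsequent Corollary; your argument would need to re-invoke Lemma \ref{6} there, but as a proof of Theorem \ref{10} itself it is complete.
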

We apply these results to $\mathbb{M}_2$ and obtain a geometric form of absolutely compatible pair of strict elements in $\mathbb{M}_2$ represented as elements in the interior of the Poincar\'{e} sphere. Recall that the Poincar\'{e} sphere can be considered as the set of rank one projections in $\mathbb{M}_2$. We deduce that absolutely compatible pair of strict elements in $\mathbb{M}_2$ replicates the orthogonality of (rank one) projections when described on an sphere touching the Poincar\'{e} sphere from inside. We understand that this property may put more light on the theory of polarization in optics as well as some other fields of physics. We propose a model for physical verification of this idea. 

\section{ Absolute compatibility and strict projections}

\begin{theorem}\label{8}
	Let $\mathcal{M}$ be a von Neumann algebra with the underlying Hilbert space $H$. Assume that $a, b \in [0, 1]_{\mathcal{M}}$ be a strict and absolutely compatible pair and let $\mathcal{M}_1$ be the von Neumann subalgebra of $\mathcal{M}$ generated by $a$ and $b$. Then $\mathcal{M}_1$ is unitally isomorphic to $M_2(\mathcal{M}_0)$ for a suitable abelian von Neumann algebra $\mathcal{M}_0$.
\end{theorem}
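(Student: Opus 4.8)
The plan is to reduce everything to the matricial picture supplied by Theorem \ref{2} and then manufacture a system of $2\times2$ matrix units inside $\mathcal{M}_1$ whose ``scalar corner'' is the abelian algebra generated by $a_1$ and $b_1$. First I would apply Theorem \ref{2} to obtain the halving projection $p = 1 - r(a\circ b)$, the commuting strict elements $a_1, b_1 \in [0,p]\cap\mathcal{M}$ with $a_1 + b_1$ strict in $p\mathcal{M}p$, and the unitary $U\colon H \to K\oplus K$. Writing $\wt a := U a U^*$ and $\wt b := U b U^*$, it suffices to analyse $\wt{\mathcal{M}_1} := U\mathcal{M}_1 U^* = \{\wt a, \wt b\}''$ acting on $K\oplus K$. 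Set $c := (a_1 b_1)^{\frac12}$ and let $\mathcal{M}_0 := \{a_1, b_1\}''$; this is abelian because $a_1 b_1 = b_1 a_1$, and it contains $c$, $p - a_1$, $p - b_1$ and $p - a_1 - b_1$. Since every entry of $\wt a$ and $\wt b$ lies in $\mathcal{M}_0$ and $M_2(\mathcal{M}_0)$ is weak$^*$-closed, we get the easy inclusion $\wt{\mathcal{M}_1}\subseteq M_2(\mathcal{M}_0)$.

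The heart of the argument is to produce the matrix units in $\wt{\mathcal{M}_1}$. A direct computation using $c^2 = a_1 b_1$ and the commutativity of the entries gives
\[
\wt a\circ\wt b = \begin{bmatrix} 0 & 0 \\ 0 & p - a_1 - b_1\end{bmatrix}, \qquad \tfrac12[\wt a,\wt b] = \begin{bmatrix} 0 & t \\ -t & 0\end{bmatrix},
\]
where $t := c\,(p - a_1 - b_1)\ge 0$. Because $a_1 + b_1$ is strict, $r(p - a_1 - b_1) = p$, so the range projection of the positive element $\wt a\circ\wt b$ is $P_0 = \left[\begin{smallmatrix} 0 & 0 \\ 0 & p\end{smallmatrix}\right]\in\wt{\mathcal{M}_1}$; together with the unit $1_{\mathcal{M}} = \left[\begin{smallmatrix} p & 0 \\ 0 & p\end{smallmatrix}\right]$ this yields the diagonal matrix units $e_{22} = P_0$ and $e_{11} = 1_{\mathcal{M}} - P_0$. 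For the off-diagonal unit I would compute $r(t)$: since $a_1, b_1$ are strict we have $r(a_1) = r(b_1) = p$, whence $r(c) = r(a_1 b_1) = r(a_1)\wedge r(b_1) = p$, and combined with $r(p - a_1 - b_1) = p$ for the commuting positive factors this gives $r(t) = p$. Thus in the polar decomposition of the skew-adjoint element $\tfrac12[\wt a,\wt b]\in\wt{\mathcal{M}_1}$ the accompanying partial isometry is the unitary $V = \left[\begin{smallmatrix} 0 & p \\ -p & 0\end{smallmatrix}\right]$, which therefore lies in $\wt{\mathcal{M}_1}$; setting $e_{12} = e_{11}V$ and $e_{21} = e_{12}^*$ completes a system of $2\times2$ matrix units inside $\wt{\mathcal{M}_1}$ with $e_{11} + e_{22} = 1_{\mathcal{M}}$.

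With the matrix units in hand, the standard structure theory gives $\wt{\mathcal{M}_1}\cong M_2(\mathcal{N})$ where $\mathcal{N} := e_{11}\wt{\mathcal{M}_1}e_{11}$. It remains to identify $\mathcal{N}$. On one hand $a_1 e_{11} = e_{11}\wt a e_{11}$ and $b_1 e_{11} = e_{11}\wt b e_{11}$ lie in $\mathcal{N}$, so $\mathcal{M}_0 e_{11}\subseteq\mathcal{N}$; on the other hand the inclusion $\wt{\mathcal{M}_1}\subseteq M_2(\mathcal{M}_0)$ forces $\mathcal{N} = e_{11}\wt{\mathcal{M}_1}e_{11}\subseteq e_{11}M_2(\mathcal{M}_0)e_{11} = \mathcal{M}_0 e_{11}$. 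Hence $\mathcal{N} = \mathcal{M}_0 e_{11}\cong\mathcal{M}_0$ is abelian, and pulling back along $U$ gives the desired unital isomorphism $\mathcal{M}_1\cong\wt{\mathcal{M}_1}\cong M_2(\mathcal{M}_0)$.

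I expect the main obstacle to be the off-diagonal step, namely showing $r(t) = p$: this is where the strictness hypotheses on $a_1$, $b_1$ and on $a_1 + b_1$ must all be used simultaneously, via the identity $r(xy) = r(x)\wedge r(y)$ for commuting positive elements, to guarantee that $\tfrac12[\wt a,\wt b]$ has a genuine unitary (rather than merely partial isometric) phase and so delivers a \emph{full} off-diagonal matrix unit. The verification that $\wt a$, $\wt b$ and the matrix units generate nothing beyond $M_2(\mathcal{M}_0)$ (the reverse inclusion controlling the corner $\mathcal{N}$) is the other place needing care, but it follows cleanly once the inclusion $\wt{\mathcal{M}_1}\subseteq M_2(\mathcal{M}_0)$ is recorded at the outset.
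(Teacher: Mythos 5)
Your proof is correct, and although it follows the paper's overall skeleton --- invoke Theorem \ref{2}, record the easy inclusion $\wt{\mathcal{M}_1} \subseteq M_2(\mathcal{M}_0)$, and obtain the diagonal projections from $P_0 = r(\wt a \circ \wt b)$, which is exactly how the paper puts $0 \oplus p$ into $\mathcal{M}_1$ --- the two decisive steps are carried out by genuinely different means. For the off-diagonal matrix unit, the paper works with the square roots $a_0 = a_1^{1/2}$, $b_0 = b_1^{1/2}$ of your commuting pair: it first places $a_0 \oplus 0$, $b_0 \oplus 0$ and $\left[\begin{smallmatrix} 0 & a_0 b_0 \\ 0 & 0 \end{smallmatrix}\right]$ in $\mathcal{M}_1$, then multiplies by $\left[\begin{smallmatrix} (\frac 1n + a_0)^{-1} & 0 \\ 0 & n \end{smallmatrix}\right]$ and passes to strong-operator limits, using $(\frac 1n + a_0)^{-1} a_0 \to r(a_0) = p$, to land on $\left[\begin{smallmatrix} 0 & b_0 \\ 0 & 0 \end{smallmatrix}\right]$ and finally on $\left[\begin{smallmatrix} 0 & p \\ 0 & 0 \end{smallmatrix}\right]$. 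You get the same matrix unit in one stroke from the polar decomposition of the commutator $\frac{1}{2}[\wt a, \wt b]$, using strictness of $a_1$, $b_1$ and $a_1 + b_1$ (via $r(xy) = r(x) \wedge r(y)$ for commuting positives) to see that the modulus $\left[\begin{smallmatrix} t & 0 \\ 0 & t \end{smallmatrix}\right]$ has full range projection, so the phase is the unitary $\left[\begin{smallmatrix} 0 & p \\ -p & 0 \end{smallmatrix}\right]$ and lies in $\wt{\mathcal{M}_1}$ automatically; this trades the paper's explicit approximation for the standard fact that the partial isometry of a polar decomposition stays in the algebra, and is arguably cleaner. The endgames also differ: the paper proves the set equality $W^* \mathcal{M}_1 W = M_2(\mathcal{M}_0)$ by approximating each entry of an arbitrary matrix by polynomials in $a_0, b_0$ in the strong operator topology, whereas you invoke the matrix-unit structure theorem and pin down the corner $\mathcal{N} = e_{11} \wt{\mathcal{M}_1} e_{11} = \mathcal{M}_0 e_{11}$ by a two-sided inclusion; since your matrix units are the standard ones, your isomorphism is implemented by the identity map and the same equality drops out, so nothing is lost relative to the paper's sharper-looking conclusion.
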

\begin{proof}
	Put $p = 1 - r(a \circ b)$ and $K = p H$. It follows from the proof of \cite[Theorem 1.4]{K20} that there exists a partial isometry $u \in \mathcal{M}$ such that $u^* u = p$ and $u u^* = 1 - p$. (That is, $p$ is a \emph{halving projection}.) Thus $u: p H \to (1 - p) H$ is a unitary operator and consequently, $W := \begin{bmatrix} 0 & u \\ p & 0 \end{bmatrix}: K \oplus K \to H$ is also a unitary operator. It further follows that there exist $a_0, b_0 \in [0, p] \cap  \mathcal{M}$ with $a_0 b_0 = b_0 a_0$, $a_0^2 + b_0^2 \le p$ together with $a_0$, $b_0$ and $a_0^2 + b_0^2$ strict in $p \mathcal{M}^+ p$ such that 
	$$W^* a W = \begin{bmatrix} a_0^2 & a_0 b_0 \\ a_0 b_0 & p - a_0^2 \end{bmatrix}; \quad W^* b W = \begin{bmatrix} b_0^2 & - a_0 b_0 \\ - a_0 b_0 & p - b_0^2 \end{bmatrix}.$$ 
	
	Let $\mathcal{M}_1$ be the von Neumann algebra generated by $a_1 := W^* a W$ and $b_1 := W^* b W$ and let $\mathcal{M}_0$ be the von Neumann algebra generated by $a_0$ and $b_0$. Then $\mathcal{M}_1 \subset M_2(\mathcal{M}_0)$. Also, as $a_0 b_0 = b_0 a_0$, we note that $\mathcal{M}_0$ is abelian. We show that $M_2(\mathcal{M}_0) \subset \mathcal{M}_1$. 
	
	We have $a_1 \circ b_1 = 0 \oplus (p - a_0^2 - b_0^2)$ and $r(p - a_0^2 - b_0^2) = p$ so that $0 \oplus p = r(a_1 \circ b_1) \in \mathcal{M}_1$. As $p \oplus p$ is the unity of $M_2(\mathcal{M}_0)$, we also have $p \oplus 0 \in \mathcal{M}_1$. Therefore, $a_0^2 \oplus 0 = (p \oplus 0) a_1 (p \oplus 0) \in \mathcal{M}_1$ and consequently, $a_0 \oplus 0 \in \mathcal{M}_1$. Similarly, we can show that $b_0 \oplus 0 \in \mathcal{M}_1$. Now it follows that $\begin{bmatrix} 0 & a_0 b_0 \\ 0 & 0 \end{bmatrix}, \begin{bmatrix} 0 & 0 \\ a_0 b_0 & 0 \end{bmatrix} \in \mathcal{M}_1$. 
	
	Next, we note that $\begin{bmatrix} \frac 1n  + a_0 & 0 \\ 0 & \frac 1n \end{bmatrix}$ is invertible in $\mathcal{M}_1$ with 
	$$\begin{bmatrix} \frac 1n  + a_0 & 0 \\ 0 & \frac 1n \end{bmatrix}^{-1} = \begin{bmatrix} (\frac 1n  + a_0)^{-1} & 0 \\ 0 & n \end{bmatrix}.$$ 
	Thus 
	\[ \begin{bmatrix} 0 & (\frac 1n  + a_0)^{-1} a_0 b_0 \\ 0 & 0 \end{bmatrix} = \begin{bmatrix} (\frac 1n  + a_0)^{-1} & 0 \\ 0 & n \end{bmatrix} \begin{bmatrix} 0 & a_0 b_0 \\ 0 & 0 \end{bmatrix} \in \mathcal{M}_1. \]
	As $(\frac 1n + a_0)^{-1} a_0$ converges $r(a_0) = p$ in the strong operator topology, we have $(\frac 1n + a_0)^{-1} a_0 b_0$ converges to $p b_0 = b_0$ in the strong operator topology. Thus 
	\[ \begin{bmatrix} 0 & b_0 \\ 0 & 0 \end{bmatrix} = SOT\lim_{n \to \infty} \begin{bmatrix} 0 & (\frac 1n  + a_0)^{-1} a_0 b_0 \\ 0 & 0 \end{bmatrix} \in \mathcal{M}_1. \] 
	Since $a_0 b_0 = b_0 a_0$, in the same way we can show that $\begin{bmatrix} 0 & a_0 \\ 0 & n \end{bmatrix} \in \mathcal{M}_1$. In a similar way, we can also get $\begin{bmatrix} 0 & 0 \\ a_0 & 0 \end{bmatrix}, \begin{bmatrix} 0 & 0 \\ b_0 & 0 \end{bmatrix} \in \mathcal{M}_1$. Further, 
	\[ \begin{bmatrix} 0 & p \\ 0 & 0 \end{bmatrix} = SOT\lim_{n \to \infty} \begin{bmatrix} 0 & (\frac 1n  + a_0)^{-1} a_0 \\ 0 & 0 \end{bmatrix} \in \mathcal{M}_1. \] 
	Similarly, $\begin{bmatrix} 0 & 0 \\ p & 0 \end{bmatrix} \in \mathcal{M}_1$. 
	
	Finally, let $x = \begin{bmatrix} x_{11} & x_{12} \\ x_{21} & x_{22} \end{bmatrix} \in M_2(\mathcal{M}_0)$ so that $x_{ij} \in \mathcal{M}_0, 1 \le i, j \le 2$. Then there exist nets $\lbrace q_{\lambda_{ij}} \rbrace$ of polynomials in two variables such that $x_{ij} = SOT-\lim q_{\lambda_{ij}} (a_0, b_0) \in \mathcal{M}_0$ for $1 \le i, j \le 2$. Thus 
	$$x = \sum x_{ij} \otimes e_{ij} = SOT\lim \sum q_{\lambda_{ij}} (a_0, b_0) \otimes e_{ij} \in \mathcal{M}_1.$$
	Hence $M_2(\mathcal{M}_0) \subset \mathcal{M}_1$.
\end{proof}
In this section, we introduce the notions of strict projections and strict unitaries in $M_2(\mathcal{M}_0)$ where $\mathcal{M}_0$ is an abelian von Neumann algebra and relate these notions to of absolute compatibility therein.
\begin{definition}
	Let $\mathcal{M}_0$ be an abelian von Neumann algebra and consider the von Neumann algebra $M_2(\mathcal{M}_0)$ of $2 \times 2$ matrices over $\mathcal{M}$. A unitary $U \in M_2(\mathcal{M}_0)$ is said to be a \emph{strict unitary}, if $U = \begin{bmatrix} u_1 & u_2 \\ u_3 & u_4 \end{bmatrix}$ and $u_1, u_2, u_3, u_4$ are strict in $\mathcal{M}_0$. Next, a projection  $P \in M_2(\mathcal{M}_0)$ is said to be a \emph{strict projection}, if $P = \begin{bmatrix} p_1 & p \\ p^* & p_2 \end{bmatrix}$ and $p_1$ is strict in $\mathcal{M}_0^+$ with $p_1 + p_2 = 1$.
\end{definition}
Note that if $P = \begin{bmatrix} p_1 & p \\ p^* & p_2 \end{bmatrix}$ is a strict projection, then $p^* p = p_1 p_2$. Thus $p$ is also strict in $\mathcal{M}_0$. Also, by the definition, $p_2$ is strict in $\mathcal{M}_0^+$ as well. 
\begin{proposition}\label{3}
	Let $\mathcal{M}_0$ be an abelian von Neumann algebra. Then $P$ is a strict projection in $M_2(\mathcal{M}_0)$ if and only if there exists a strict unitary $U \in M_2(\mathcal{M}_0)$ with $U = \begin{bmatrix} u_1 & u_2 \\ u_3 & u_4 \end{bmatrix}$ such that $P = \begin{bmatrix} u_1^* u_1 & u_1^*u_2 \\ u_2^*u_1 & u_2^*u_2 \end{bmatrix}$ and $P' := I_2 - P = \begin{bmatrix} u_3^* u_3 & u_3^*u_4 \\ u_4^*u_3 & u_4^*u_4 \end{bmatrix}$. Here $I_2$ is the identity of $M_2(\mathcal{M}_0)$.
\end{proposition}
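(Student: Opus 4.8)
The plan is to read both of the displayed identities row by row. Writing $r_1 = [u_1, u_2]$ and $r_2 = [u_3, u_4]$ for the two rows of $U$, the matrices appearing in the statement are exactly $r_1^* r_1$ and $r_2^* r_2$, while unitarity of $U$ is the statement that its rows and columns are orthonormal. In particular $r_1 r_1^* = u_1 u_1^* + u_2 u_2^* = 1$, so $P := r_1^* r_1$ automatically satisfies $P^2 = r_1^*(r_1 r_1^*) r_1 = r_1^* r_1 = P$ and $P^* = P$; its diagonal entries are $p_1 = |u_1|^2$, $p_2 = |u_2|^2$, with $p_1 + p_2 = 1$. Likewise $U^* U = r_1^* r_1 + r_2^* r_2 = I_2$ forces $r_2^* r_2 = I_2 - P = P'$. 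Thus the whole proposition reduces to tracking when the corner $p_1 = |u_1|^2$ is strict.

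For the implication $(\Leftarrow)$ I would argue directly from this reformulation: given a strict unitary $U$, the computation above already shows that $P = r_1^* r_1$ is a projection with $p_1 + p_2 = 1$ and $P' = I_2 - P = r_2^* r_2$. Since $u_1$ is strict and, in the abelian algebra $\mathcal{M}_0$, an element $|u_1|^2$ is strict exactly when $|u_1|$ is (the two have the same range projection and the same spectral projection at $1$), the corner $p_1 = |u_1|^2$ is strict. Hence $P$ is a strict projection.

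The real content is the converse $(\Rightarrow)$, where I must build a strict unitary from a given strict projection $P = \begin{bmatrix} p_1 & p \\ p^* & p_2 \end{bmatrix}$. Here I would invoke the facts recorded just before the proposition: $p_1 + p_2 = 1$, $p^* p = p_1 p_2$, and $p_1$, $p_2$, $p$ are all strict. Using strictness of $p$, I take the polar decomposition $p = w\,|p|$ with $w$ a genuine \emph{unitary}, noting $|p| = (p_1 p_2)^{1/2} = p_1^{1/2} p_2^{1/2}$, and then set
\[ u_1 = p_1^{1/2}, \qquad u_2 = w\,p_2^{1/2}, \qquad u_3 = -\,w^*\,p_2^{1/2}, \qquad u_4 = p_1^{1/2}. \]
Using commutativity of $\mathcal{M}_0$ throughout (so that $w$ and the square roots pass through one another), I would then verify by direct entrywise computation that $U^* U = U U^* = I_2$, that $r_1^* r_1 = P$ and $r_2^* r_2 = I_2 - P$, and that every $u_i$ is strict, since $|u_1| = |u_4| = p_1^{1/2}$ and $|u_2| = |u_3| = p_2^{1/2}$ are strict because $p_1$ and $p_2$ are.

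I expect the main obstacle to be the simultaneous bookkeeping in the converse: the off-diagonal constraint $u_1^* u_2 = p$, its companion $u_3^* u_4 = -p$, and the row-orthogonality $u_1 u_3^* + u_2 u_4^* = 0$ must all hold at once while each entry stays strict. Commutativity of $\mathcal{M}_0$ is precisely what lets a single phase $w$ reconcile these: it allows $w$ to commute past the roots $p_i^{1/2}$, so that the one relation $p = w\,(p_1 p_2)^{1/2}$ delivers all three requirements simultaneously. The only genuinely analytic point, and the place where the hypothesis is really used, is that the polar part $w$ is a true unitary rather than a mere partial isometry; this is guaranteed by the strictness of $p$ (equivalently $r(|p|) = 1$).
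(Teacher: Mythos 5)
Your proof is correct and takes essentially the same route as the paper: the paper's converse is the same direct computation from $U^*U = I_2 = UU^*$ (your row-vector packaging $P = r_1^*r_1$, $r_1 r_1^* = 1$ just organizes it more cleanly), and your construction $u_1 = p_1^{1/2}$, $u_2 = w\,p_2^{1/2}$, $u_3 = -w^*p_2^{1/2}$, $u_4 = p_1^{1/2}$ is the paper's matrix $u_1 = u^*p_1^{1/2}$, $u_2 = p_2^{1/2}$, $u_3 = -u^*p_2^{1/2}$, $u_4 = p_1^{1/2}$ with the polar phase of $p$ merely shifted to the other column. Both arguments rest on the same preliminary facts (the unitary polar part of the strict entry $p$, $|p| = p_1^{1/2}p_2^{1/2}$, and strictness of $p_2 = 1 - p_1$), so there is nothing further to flag.
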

\begin{proof}
	First assume that $P = \begin{bmatrix} p_1 & p \\ p^* & p_2 \end{bmatrix}$ is a strict projection. Then $\vert p \vert = p_1^{\frac 12} p_2^{\frac 12}$. Let $u$ be the unitary in $\mathcal{M}_0$ such that $p = u \vert p \vert$. Put $u_1 = u^* p_1^{\frac 12}$, $u_2 = p_2^{\frac 12}$, $u_3 = - u^* p_2^{\frac 12}$ and $u_4 = p_1^{\frac 12}$. Then $U = \begin{bmatrix} u_1 & u_2 \\ u_3 & u_4 \end{bmatrix}$ is a strict unitary in $M_2(\mathcal{M}_0)$ such that $P = \begin{bmatrix} u_1^* u_1 & u_1^*u_2 \\ u_2^*u_1 & u_2^*u_2 \end{bmatrix}$  and $P' = \begin{bmatrix} u_3^* u_3 & u_3^*u_4 \\ u_4^*u_3 & u_4^*u_4 \end{bmatrix}$. 
	
	Conversely if $U = \begin{bmatrix} u_1 & u_2 \\ u_3 & u_4 \end{bmatrix}$ is a strict unitary in $M_2(\mathcal{M}_0)$, then as $U^* U = I_2 = U U^*$ using matrix multiplications, we can verify that $P = \begin{bmatrix} u_1^* u_1 & u_1^*u_2 \\ u_2^*u_1 & u_2^*u_2 \end{bmatrix}$ is a strict projection and that $P' = \begin{bmatrix} u_3^* u_3 & u_3^*u_4 \\ u_4^*u_3 & u_4^*u_4 \end{bmatrix}$. 
\end{proof} 
\begin{lemma}\label{4}
	Let $\mathcal{M}_0$ be an abelian von Neumann algebra. Then $U = \begin{bmatrix} u_1 & u_2 \\ u_3 & u_4 \end{bmatrix}$ is a strict unitary in $M_2(\mathcal{M}_0)$ if and only if there exists a strict element $a_0 \in \mathcal{M}_0^+$ and unitaries $w_1, w_2, w_3 \in \mathcal{M}_0$ such that $u_1 = w_1 a_0$, $u_2 = w_2 (1 - a_0^2)^{\frac 12}$, $u_3 = w_3 (1 - a_0^2)^{\frac 12}$ and $u_4 = - w_1^* w_2 w_3 a_0$. 
\end{lemma}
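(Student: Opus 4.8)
The plan is to prove the nontrivial forward implication by first reading off the modulus of each entry from unitarity, and only afterwards pinning down the unitary phase of $u_4$. Working inside the abelian algebra $\mathcal{M}_0$, all elements commute and $u_i^* u_i = u_i u_i^* = \vert u_i\vert^2$. Comparing the $(1,1)$ entry of $U^*U = I_2$ with the $(1,1)$ entry of $UU^* = I_2$ gives $\vert u_1\vert^2 + \vert u_3\vert^2 = 1 = \vert u_1\vert^2 + \vert u_2\vert^2$, whence $\vert u_2\vert = \vert u_3\vert$; comparing the $(2,2)$ entry of $UU^* = I_2$ with the first relation gives $\vert u_3\vert^2 + \vert u_4\vert^2 = 1$, whence $\vert u_1\vert = \vert u_4\vert$. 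I would therefore set $a_0 := \vert u_1\vert = \vert u_4\vert$, which is strict precisely because $u_1$ is strict, and record $\vert u_2\vert = \vert u_3\vert = (1 - a_0^2)^{\frac12}$.

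Next, since each $u_i$ is strict, its polar decomposition carries a genuine unitary phase (as noted in the introduction), so I may write $u_1 = w_1 a_0$, $u_2 = w_2(1-a_0^2)^{\frac12}$, $u_3 = w_3(1-a_0^2)^{\frac12}$ and $u_4 = w_4 a_0$ with $w_1, w_2, w_3, w_4$ unitary in $\mathcal{M}_0$. It then remains to identify $w_4$. The off-diagonal relation $u_1^* u_2 + u_3^* u_4 = 0$ coming from $U^*U = I_2$, after substitution and use of commutativity, becomes $(w_1^* w_2 + w_3^* w_4)\, a_0 (1-a_0^2)^{\frac12} = 0$. Here is the crux: both $a_0$ and $(1-a_0^2)^{\frac12}$ are strict, so in the abelian algebra their product $a_0(1-a_0^2)^{\frac12}$ has range projection $1$; hence $c\,a_0(1-a_0^2)^{\frac12} = 0$ forces $c = 0$, and cancelling yields $w_1^* w_2 + w_3^* w_4 = 0$, i.e. $w_4 = -w_3 w_1^* w_2 = -w_1^* w_2 w_3$. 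Therefore $u_4 = -w_1^* w_2 w_3\, a_0$, as claimed. This cancellation, together with the promotion of the polar partial isometries to unitaries via strictness, is the main obstacle; the rest is bookkeeping.

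For the converse I would simply take the given formulas as the definition of $u_1, u_2, u_3, u_4$ and check directly that $U$ is a strict unitary. Each $\vert u_i\vert$ equals $a_0$ or $(1-a_0^2)^{\frac12}$, and both are strict (for the latter, strictness of $a_0$ with $0 \le a_0 \le 1$ forces $s(1-a_0^2) = n(a_0) = 0$ and $n(1-a_0^2) = s(a_0) = 0$), so every entry is strict. Finally a routine $2\times 2$ computation, using $a_0^2 + (1-a_0^2) = 1$, commutativity, and $w_i^* w_i = 1$, shows $U^*U = I_2 = UU^*$; the off-diagonal entries vanish exactly because of the sign and the factor $w_1^* w_2 w_3$ appearing in $u_4$. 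Hence $U$ is a strict unitary, which completes the argument.
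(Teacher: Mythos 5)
Your proof is correct and follows essentially the same route as the paper's: extract $|u_1| = |u_4| = a_0$ and $|u_2| = |u_3| = (1-a_0^2)^{\frac12}$ from the unitarity relations, promote the polar parts to unitaries via strictness, and pin down $w_4$ by cancelling the strict factor $a_0(1-a_0^2)^{\frac12}$ in the off-diagonal relation. The only cosmetic difference is that the paper invokes \cite[Proposition 2.3(iv)]{JKP} for the cancellation step, whereas you justify it directly through range projections, which is an equivalent argument.
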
 
\begin{proof}
	As $U$ is unitary, we have $U^* U = I_2 = U U^*$. Thus as $\mathcal{M}_0$ be abelian, matricial computations yield 
	$$u_1^* u_1 + u_3^* u_3 = 1 = u_2^* u_2 + u_4^* u_4;$$
	$$u_1^* u_2 + u_3^* u_4 = 0;$$ 
	$$u_1^* u_1 + u_2^* u_2 = 1 = u_3^* u_3 + u_4^* u_4;$$
	$$u_1^* u_3 + u_2^* u_4 = 0.$$ 
	In other words, $u_1^* u_1 = u_4^* u_4$, $u_2^* u_2 = u_3^* u_3$, $u_1^* u_2 = - u_3^* u_4$ and $u_1^* u_3 = - u_2^* u_4$ with $u_1^* u_1 + u_2^* u_2 = 1$. Put $\vert u_1 \vert = a_0$. Then as $u_1$ is strict in $\mathcal{M}_0$, we have $a_0$ is strict in $\mathcal{M}_0^+$. Also, $\vert u_4 \vert = \vert u_1 \vert = a_0$ and $\vert u_2 \vert = \vert u_3 \vert = (1 - a_0^2)^{\frac 12}$. Consider the polar decompositions $u_i = w_i \vert u_i \vert$, $1 \le i \le 4$. Then $w_1$, $w_2$, $w_3$ and $w_4$ are unitaries in $\mathcal{M}_0$. Also, $u_1^* u_2 = - u_3^* u_4$ reduces to $a_0 (1 - a_0^2)^{\frac 12} (w_1^* w_2 + w_3^* w_4) = 0$. Thus as $a_0$ is strict, by \cite[Proposition 2.3(iv)]{JKP}, we conclude that $w_1^* w_2 + w_3^* w_4 = 0$. In other words, $w_4 = - w_1^* w_2 w_3$. Now, the verification of the converse part is routine.
\end{proof}
The next result is now follows immediately.
\begin{corollary}\label{5}
	Let $\mathcal{M}_0$ be an abelian von Neumann algebra. Then $P = \begin{bmatrix} p_1 & p \\ p^* & p_2 \end{bmatrix}$ is a strict projection in $M_2(\mathcal{M}_0)$ if and only if there exists a strict element $a_0 \in \mathcal{M}_0^+$ and a unitary $w \in \mathcal{M}_0$ such that $p_1 = a_0^2$, $p_2 = 1 - a_0^2$ and $p = w a_0 (1 - a_0^2)^{\frac 12}$. 
\end{corollary}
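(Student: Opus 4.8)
The plan is to chain Proposition \ref{3} and Lemma \ref{4}, which together already contain all the content; the role of commutativity of $\mathcal{M}_0$ is simply to collapse the three unitary parameters appearing in Lemma \ref{4} into a single one. First I would invoke Proposition \ref{3}: $P = \begin{bmatrix} p_1 & p \\ p^* & p_2 \end{bmatrix}$ is a strict projection if and only if there is a strict unitary $U = \begin{bmatrix} u_1 & u_2 \\ u_3 & u_4 \end{bmatrix}$ in $M_2(\mathcal{M}_0)$ with $p_1 = u_1^* u_1$, $p = u_1^* u_2$ and $p_2 = u_2^* u_2$. Then I would feed in the parametrization from Lemma \ref{4}, writing $u_1 = w_1 a_0$ and $u_2 = w_2 (1 - a_0^2)^{\frac 12}$ for a strict element $a_0 \in \mathcal{M}_0^+$ and unitaries $w_1, w_2 \in \mathcal{M}_0$.

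Next I would compute the three relevant entries, using that $\mathcal{M}_0$ is abelian, so that positive elements commute with the unitaries and $w_i^* w_i = 1$. This yields $p_1 = a_0 w_1^* w_1 a_0 = a_0^2$, $p_2 = (1 - a_0^2)^{\frac 12} w_2^* w_2 (1 - a_0^2)^{\frac 12} = 1 - a_0^2$, and $p = a_0 w_1^* w_2 (1 - a_0^2)^{\frac 12} = (w_1^* w_2) a_0 (1 - a_0^2)^{\frac 12}$. Setting $w := w_1^* w_2$, which is again a unitary in $\mathcal{M}_0$, gives exactly the asserted form. Note that the entries $u_3, u_4$ of $U$ play no role in $p_1, p, p_2$, so the two parameters $w_1, w_2$ reduce to the single unitary $w$.

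For the converse, given a strict $a_0 \in \mathcal{M}_0^+$ and a unitary $w \in \mathcal{M}_0$, I would realize the prescribed data by the explicit choice $w_1 = 1$, $w_2 = w$, $w_3 = 1$ (so that $w_4 = -w$) in Lemma \ref{4}, which produces a genuine strict unitary $U$; then Proposition \ref{3} guarantees that the associated $P$ is a strict projection, and the computation above shows that its entries are $a_0^2$, $w a_0 (1 - a_0^2)^{\frac 12}$ and $1 - a_0^2$, as required.

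There is essentially no obstacle here: the statement is a bookkeeping consequence of the two preceding results. The only point meriting care is to record explicitly that abelianness of $\mathcal{M}_0$ is precisely what permits the positive factors to commute past the unitaries and what makes the reduction $w_1^* w_2 \mapsto w$ legitimate; without it one could not collapse the parameters nor guarantee that $p$ has the single-unitary form $w a_0 (1 - a_0^2)^{\frac 12}$.
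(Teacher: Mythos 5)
Your proof is correct and follows essentially the same route as the paper: the paper states that Corollary \ref{5} ``follows immediately'' from Proposition \ref{3} and Lemma \ref{4}, and your argument is exactly that chaining, with the entry computations $p_1 = a_0^2$, $p_2 = 1 - a_0^2$, $p = (w_1^* w_2)\, a_0 (1 - a_0^2)^{\frac 12}$ and the explicit choice $w_1 = 1$, $w_2 = w$, $w_3 = 1$ for the converse filled in. Nothing is missing; you have simply made explicit the bookkeeping the paper leaves to the reader.
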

\begin{lemma}\label{6}
	Let $\mathcal{M}_0$ be an abelian von Neumann algebra. Then $P$ is a strict projection in $M_2(\mathcal{M}_0)$ if and only if there exists a strict unitary $U \in M_2(\mathcal{M}_0)$ such that $P = U^* \begin{bmatrix} 0 & 0 \\ 0 & 1 \end{bmatrix} U$. 
\end{lemma}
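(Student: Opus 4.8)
The plan is to reduce the statement to Proposition \ref{3}, which already produces a strict unitary whose ``first column'' reproduces a given strict projection and whose ``second column'' reproduces its complement. The bridge is the elementary identity
$$U^* \begin{bmatrix} 0 & 0 \\ 0 & 1 \end{bmatrix} U = \begin{bmatrix} u_3^* u_3 & u_3^* u_4 \\ u_4^* u_3 & u_4^* u_4 \end{bmatrix}, \qquad U = \begin{bmatrix} u_1 & u_2 \\ u_3 & u_4 \end{bmatrix},$$
which is just a single $2 \times 2$ matrix multiplication. By Proposition \ref{3} the right-hand side equals $I_2 - P$, where $P$ is the strict projection attached to the first column of $U$; so conjugating $\begin{bmatrix} 0 & 0 \\ 0 & 1 \end{bmatrix}$ by a strict unitary always yields the complement of a strict projection.

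First I would record the remark (already made after the definition) that the complement of a strict projection is again strict: if $P = \begin{bmatrix} p_1 & p \\ p^* & p_2 \end{bmatrix}$ is strict then $I_2 - P = \begin{bmatrix} p_2 & -p \\ -p^* & p_1 \end{bmatrix}$ has the strict diagonal entry $p_2$ and $p_2 + p_1 = 1$. With this in hand, one direction of the equivalence is immediate: for a strict unitary $U$ the displayed identity and Proposition \ref{3} give $U^* \begin{bmatrix} 0 & 0 \\ 0 & 1 \end{bmatrix} U = I_2 - P$ for the strict projection $P$ associated to $U$, and $I_2 - P$ is strict.

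For the converse I would apply Proposition \ref{3} not to $P$ itself but to its complement $I_2 - P$, which is strict by the remark above. This furnishes a strict unitary $U = \begin{bmatrix} u_1 & u_2 \\ u_3 & u_4 \end{bmatrix}$ with $I_2 - P = \begin{bmatrix} u_1^* u_1 & u_1^* u_2 \\ u_2^* u_1 & u_2^* u_2 \end{bmatrix}$, and therefore $P = I_2 - (I_2 - P) = \begin{bmatrix} u_3^* u_3 & u_3^* u_4 \\ u_4^* u_3 & u_4^* u_4 \end{bmatrix}$, which by the bridging identity is exactly $U^* \begin{bmatrix} 0 & 0 \\ 0 & 1 \end{bmatrix} U$.

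I do not anticipate a real obstacle, since all the content is carried by Proposition \ref{3}. The only subtlety is bookkeeping: conjugating $\begin{bmatrix} 0 & 0 \\ 0 & 1 \end{bmatrix}$ by $U$ extracts the ``second-column'' data of $U$, which Proposition \ref{3} matches with the complement $P'$ rather than with $P$, and it is precisely the passage through $I_2 - P$ that absorbs this role swap. Should one prefer an explicit witness instead, one could invoke Lemma \ref{4} (or Corollary \ref{5}) with the strict parameter $b_0 = (1 - a_0^2)^{\frac 12}$ in place of $a_0$ and write $U$ down directly, but the complement argument is shorter and computation-free.
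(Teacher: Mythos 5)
Your proof is correct, but it takes a genuinely different route from the paper's. Both arguments share the key conjugation identity $U^* \begin{bmatrix} 0 & 0 \\ 0 & 1 \end{bmatrix} U = \begin{bmatrix} u_3^*u_3 & u_3^*u_4 \\ u_4^*u_3 & u_4^*u_4 \end{bmatrix}$ (the paper uses it for the ``if'' direction), but for the ``only if'' direction the paper does not pass through Proposition \ref{3} at all: it invokes Corollary \ref{5} to parametrize $P$ by a strict element $a_0 \in \mathcal{M}_0^+$ and a unitary $w \in \mathcal{M}_0$, and then exhibits an explicit strict unitary $U = \begin{bmatrix} (1-a_0^2)^{\frac 12} & -wa_0 \\ a_0 & w(1-a_0^2)^{\frac 12} \end{bmatrix}$ satisfying $P = U^* \begin{bmatrix} 0 & 0 \\ 0 & 1 \end{bmatrix} U$. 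Your complementation argument---apply Proposition \ref{3} to $I_2 - P$, which is again a strict projection because its $(1,1)$ entry $p_2$ is strict and its diagonal entries sum to $1$, and then read off $P$ as the second-row Gram matrix of the resulting unitary---reaches the same conclusion with no construction and without ever needing Lemma \ref{4} or Corollary \ref{5}. In the ``if'' direction the paper concludes strictness of $U^* \begin{bmatrix} 0 & 0 \\ 0 & 1 \end{bmatrix} U$ directly from strictness of $u_3$ and $u_4$, whereas you again defer to Proposition \ref{3} plus the complement remark; both are valid. What the paper's route buys is a concrete formula for the unitary in terms of the parameters of Corollary \ref{5}; what yours buys is economy, since all content is carried by Proposition \ref{3} and one matrix multiplication. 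One cosmetic slip: the entries $u_1, u_2$ (resp.\ $u_3, u_4$) form the first (resp.\ second) \emph{row} of $U$, not the first/second column as you say; none of your displayed formulas are affected by this.
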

\begin{proof}
	By Corollary \ref{5}, $P = \begin{bmatrix} a_0^2 & w a_0 (1 - a_0^2)^{\frac 12} \\ w^* a_0 (1 - a_0^2)^{\frac 12} & 1 - a_0^2 \end{bmatrix}$ for some strict element $a_0 \in \mathcal{M}_0^+$ and a unitary $w \in \mathcal{M}_0$. Consider $U = \begin{bmatrix} (1 - a_0^2)^{\frac 12} & - w a_0 \\ a_0 & w (1 - a_0^2)^{\frac 12} \end{bmatrix}$. Then $U$ is a strict unitary in $M_2(\mathcal{M}_0)$ and we have $P = U^* \begin{bmatrix} 0 & 0 \\ 0 & 1 \end{bmatrix} U$. 
	
	Conversely, assume that $U = \begin{bmatrix} u_1 & u_2 \\ u_3 & u_4 \end{bmatrix}$ is a strict unitary in $M_2(\mathcal{M})$. Then $U^* \begin{bmatrix} 0 & 0 \\ 0 & 1 \end{bmatrix} U = \begin{bmatrix} u_3^* u_3 & u_3^* u_4 \\ u_4^* u_3 & u_4^* u_4 \end{bmatrix} = P$ (say). As $U$ is a strict unitary, we have $u_3$ and $u_4$ are strict elements in $\mathcal{M}_0$. Thus $P$ is a strict projection.
\end{proof}

\section{The main results}

Now we describe absolutely compatible pairs of strict elements in a von Neumann algebra in terms of strict projections. 
\begin{proof}[Proof of Theorem \ref{9}]
	First, we note that as $\mathcal{M}_0$ abelian, the centre of $M_2(\mathcal{M}_0)$ is given by 
	$$Z(M_2(\mathcal{M}_0)) = \lbrace x \otimes I_2: x \in \mathcal{M}_0 \rbrace.$$ 
	Now as $A - B = (x_0 \otimes I_2) (P - P')$ and as $P$ is a projection, we deduce that $\vert A - B \vert = x_0 \otimes I_2$. Again $I_2 - A - B = \left( (1 - x_0) \otimes I_2 \right) (I_2 - 2 P_0)$ and $P_0$ is also a projection, we further deduce that $\vert I_2 - A -B \vert = (1 - x_0) \otimes I_2$. Thus $\vert A - B \vert + \vert I_2 - A - B \vert = I_2$, that is, $A$ is absolutely compatible. 
	
	Next, we show that $A$ and $B$ are strict elements of $M_2(\mathcal{M}_0)^+$. Let $P = \begin{bmatrix} p_1 & p \\ p^* & p_2 \end{bmatrix}$. Since $P$ is a strict projection, we have $p_1$ is a strict element in $\mathcal{M}_0^+$, such that $p_1 + p_2 = 1$ and $p^* p = p_1 p_2$. Let $Q$ be a projection in $M_2(\mathcal{M}_0)$ such that $Q \le A$. Let $Q = \begin{bmatrix} q_1 & q \\ q^* & q_2 \end{bmatrix}$. As $\mathcal{M}_0$ is abelian and $Q$ is a positive element of $M_2(\mathcal{M}_0)$, we have $q^* q \le q_1 q_2$. Since $Q \le A$, we get, $q_1 \le x_0 p_1$ and $q_2 \le (1 - x_0) + x_0 p_2= 1 - x_0 p_1$. Thus $q_1 + q_2 \le 1$. 
	
	Since $Q$ is a projection, $Q^2 = Q$. Thus comparing entries, we get, 
	$$q_1 - q_1^2 = q^* q = q_2 - q_2^2; \qquad q (q_1 + q_2) = q.$$
	Thus 
	$$2 q_1 q_2 \ge 2 q^* q = q_1 + q_2 - q_1^2 - q_2^2$$
	so that $q_1 + q_2 \le (q_1 + q_2)^2$. As $q_1 + q_2 \le 1$, we also have $q_1 + q_2 \ge (q_1 + q_2)^2$ so that $q_0 := q_1 + q_2$ is a projection in $\mathcal{M}_0$. Put $Q_0 = q_0 \otimes I_2$. Then $Q_0$ is a central projection in $M_2(\mathcal{M}_0)$. Also a direct computation yields that $Q = Q^2 = Q Q_0$. Thus $Q \le Q_0 A$ and it follows that $q_1 \le x_0 q_0 p_1$ and 
	$$q_2 \le (1 - x_0) q_0 + x_0 q_0 p_2= q_0 - x_0 q_0 p_1.$$ 
	Since $q_0 = q_1 + q_2$, we get $q_1 = x_0 q_0 p_1$. Thus as $Q_0 A - Q \ge 0$, we must have $q = x_0 q_0 p$. Also as $Q_0 Q = Q$, we have $q_0 q_1 = q_1$, $q_0 q_2 = q_2$ and $q_0 q = q$. Thus 
	$$q^* q = q_1 - q_1^2 = q_1(q_0 - q_1) = q_1 q_2.$$ 
	Now, it follows that 
	$$x_0 q_0 p_1 q_2 = q_1 q_2 = q^* q = x_0^2 q_0 p^* p = x_0^2 q_0 p_1 p_2$$ 
	so that $x_0 q_0 p_1 (q_2 - x_0 p_2) = 0$. Since $x_0$ and $p_1$ are strict elements in $\mathcal{M}_0^+$, we conclude that $q_2 = x_0 q_0 p_2$. Then 
	$$q_0 = q_1 + q_2 = x_0 q_0 (p_1 + p_2) = x_0 q_0.$$ 
	Again invoking strictness of $x_0$ in $\mathcal{M}_0^+$, we deduce that $q_0 = 0$. Thus $Q = Q_0 Q = 0$. Hence $s(A) = 0$.
	
	Next, consider a projection $R$ in $M_2(\mathcal{M}_0)$ be such that $A \le R$. Then $R' \le  \left((p - x_0) \otimes I_2 \right) P_0' + (x_0 \otimes I_2) P'$. Thus, as above, we can show that $R' = 0$ so that $R = I_2$. Now it follows that $A$ is a strict element in $M_2(\mathcal{M}_0)^+$. In the same way, we can also show that $A$ is also a strict element in $M_2(\mathcal{M}_0)^+$.	
\end{proof}

\begin{proof}[Proof of Theorem \ref{10}]  
	(1) follows from \cite[Theorem 1.4]{K20} and Lemma \ref{8}. To prove (2), we again invoke \cite[Theorem 1.4]{K20} to deduce that there exist $a_0$ and $b_0$ in $\mathcal{M}_0^+$ such that $a_0^2 + b_0^2 \le p$; $a_0$, $b_0$ and $a_0^2 + b_0^2$ are strict in $\mathcal{M}_0^+$; and 
	$$U_0^* a U_0 = \begin{bmatrix} a_0^2 & a_0 b_0 \\ a_0 b_0 & p - a_0^2 \end{bmatrix}, \quad U_0^* b U_0 = \begin{bmatrix} b_0^2 & - a_0 b_0 \\ - a_0 b_0 & p - b_0^2 \end{bmatrix}.$$ 
	Now, $\begin{bmatrix} a_0^2 & a_0 b_0 \\ a_0 b_0 & p - a_0^2 \end{bmatrix} = \begin{bmatrix} 0 & 0 \\ 0 & p - a_0^2 - b_0^2 \end{bmatrix} + \begin{bmatrix} a_0^2 & a_0 b_0 \\ a_0 b_0 & b_0^2 \end{bmatrix}$. 
	
	Put $A_0 = \begin{bmatrix} a_0^2 & a_0 b_0 \\ a_0 b_0 & b_0^2 \end{bmatrix}$ and $W = \begin{bmatrix} b_0 & - a_0 \\ a_0 & b_0 \end{bmatrix}$. Then $W^* W = W W^* = (a_0^2 + b_0^2) \otimes I_2$ and $W^* P_0 W = A_0$. Consider the polar decomposition $W = U \vert W \vert$ for some partial isometry $U = \begin{bmatrix} u_1 & u_2 \\ u_3 & u_4 \end{bmatrix} \in M_2(\mathcal{M}_0)$. Then 
	$$b_0 = (a_0^2 + b_0^2)^{\frac 12} u_1 = (a_0^2 + b_0^2)^{\frac 12} u_4$$ 
	and 
	$$a_0 = - (a_0^2 + b_0^2)^{\frac 12} u_2 = (a_0^2 + b_0^2)^{\frac 12} u_3.$$ 
	Since $a_0^2 + b_0^2$ is strict in $\mathcal{M}_0^+$, we get $u_1 = u_4$ and $u_2 = - u_3$. Also, as $a_0^2 + b_0^2 = (a_0^2 + b_0^2) (u_1^2 + u_3^2)$, by the strictness of $a_0^2 + b_0^2$ in $\mathcal{M}_0^+$, we further conclude that $u_1^2 + u_3^2 = 1$. Now it is easy to verify that $U$ is a unitary. We show that $U$ is a strict unitary in $M_2(\mathcal{M}_0)$.
	
	Assume that $\mathcal{M}_0$ is acting on the Hilbert space $L$ an let $\xi \in L$. Since $b_0, a_0^2 + b_0^2 \in \mathcal{M}_0^+$, we have 
	$$0 \le \langle b_0 \xi, \xi \rangle = \langle (a_0^2 + b_0^2)^{\frac 12} u_1 \xi, \xi \rangle = \langle u_1 (a_0^2 + b_0^2)^{\frac 14} \xi, (a_0^2 + b_0^2)^{\frac 14} \xi \rangle.$$
	As $ r(a_0^2 + b_0^2) = 1$, $\lbrace (a_0^2 + b_0^2)^{\frac 14} \xi: \xi \in L \rbrace$ is dense in $L$ so $u_1 \in \mathcal{M}_0^+$. Similarly, we can conclude that $u_3 \in \mathcal{M}_0^+$.
	
	Let $p \in \mathcal{M}_0$ be a projection such that $p \le u_1$. Then $u_1 p = p$ and consequently,  
	$$b_0 p = (a_0^2 + b_0^2)^{\frac 12} u_1 p = (a_0^2 + b_0^2)^{\frac 12} p.$$ 
	Thus $b_0^2 p = (a_0^2 + b_0^2) p$ so that $a_0^2 p = 0$. Since $a_0$ is strict in $\mathcal{M}_0^+$, we get $p = 0$. Therefore, $s(u_1) = 0$. 
	
	 As $r(b_0) = 1 = r(a_0^2 + b_0^2)$ and $b_0 = (a_0^2 + b_0^2)^{\frac 12} u_1$, we can conclude that $r(u_1) = 1$. Thus $u_1$ is strict in $\mathcal{M}_0^+$. Since $u_1^2 + u_3^2 = 1$, $u_3$ is also strict in $\mathcal{M}_0^+$. Therefore, $U$ is a strict unitary. Put $P = U^* P_0 U$. Then $P$ is a strict projection and we have 
	$$A_0 = W^* P_0 W = \vert W \vert^2 P = \left( (a_0^2 + b_0^2) \otimes I_2 \right) P.$$

	Now putting $x_0 = a_0^2 + b_0^2$, (2) can now be verified in a routine way.
\end{proof} 
\begin{corollary}
	Let $\mathcal{M}$ be a von Neumann algebra with the underlying Hilbert space $H$. Then $a, b \in [0, 1]_{\mathcal{M}}$ is a strict and absolutely compatible pair, if and only if $$U^* a U = \left((p - x_0) \otimes I_2 \right) P + (x_0 \otimes I_2) P_0$$
	and 
	$$U^* b U = \left((p - x_0) \otimes I_2 \right) P + (x_0 \otimes I_2) P_1$$ 
	where $U$ is a unitary in $\mathcal{M}$, $x_0$ is a positive strict element of an abelian von Neumann subalgebra $\mathcal{M}_0$ with $p$ as its unity, $P_0 = \begin{bmatrix} 0 & 0 \\ 0 & p \end{bmatrix}$, $P_1 = \begin{bmatrix} p & 0 \\ 0 & 0 \end{bmatrix}$ and $P$ is a strict projection in $M_2(\mathcal{M}_0)$ of the form $P = \begin{bmatrix} a_0^2 & a_0 (p - a_0^2)^{\frac 12} \\ a_0 (p - a_0^2)^{\frac 12} & p - a_0^2 \end{bmatrix}$ for some strict element $a_0 \in \mathcal{M}_0^+$. 
\end{corollary}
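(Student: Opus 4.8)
The plan is to establish the two implications separately, reducing each to results already proved: the ``if'' part to Theorem \ref{9}, and the ``only if'' part to Theorems \ref{8} and \ref{10}. Since absolute compatibility and strictness are both preserved under unitary conjugation, the essential new ingredient in each direction is a change of coordinates inside $M_2(\mathcal{M}_0)$ that interchanges the roles of the pivotal projection $P_0$ and the strict projection $P$; this interchange is exactly what Lemma \ref{6} makes available.

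For the ``if'' direction I may take $U = I_2$. Given the displayed form with $P$ the explicit strict projection, I would use Lemma \ref{6} to choose a strict unitary $V \in M_2(\mathcal{M}_0)$ with $P = V^* P_0 V$, where $P_0 = \begin{bmatrix} 0 & 0 \\ 0 & p \end{bmatrix}$. Conjugation by $V$ then sends
$$((p - x_0) \otimes I_2) P + (x_0 \otimes I_2) P_0 \longmapsto ((p - x_0) \otimes I_2) P_0 + (x_0 \otimes I_2)(V P_0 V^*),$$
and likewise carries the companion element to $((p - x_0) \otimes I_2) P_0 + (x_0 \otimes I_2)(V P_0 V^*)'$. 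Because $\mathcal{M}_0$ is abelian the adjoint $V^*$ is again a strict unitary, so $V P_0 V^*$ is a strict projection by Lemma \ref{6}; hence the conjugated pair is precisely the pair $(A, B)$ of Theorem \ref{9} (with the unit $1$ read as $p$). Theorem \ref{9} gives strictness and absolute compatibility of the conjugated pair, and these properties pass back to the original pair under $V^*(\cdot)V$ and then under the outer $U$.

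For the ``only if'' direction, Theorem \ref{8} identifies $\mathcal{M}_1$ with $M_2(\mathcal{M}_0)$ and Theorem \ref{10} furnishes a unitary $U_0$, a strict $x_0 \in \mathcal{M}_0^+$ and a strict projection $P$ with
$$U_0^* a U_0 = ((p - x_0) \otimes I_2) P_0 + (x_0 \otimes I_2) P, \qquad U_0^* b U_0 = ((p - x_0) \otimes I_2) P_0 + (x_0 \otimes I_2) P'.$$
Here the coefficients $p - x_0$ and $x_0$ sit on the projections opposite to those the corollary demands, so the work is to swap them. Writing $P = V^* P_0 V$ for a strict unitary $V$ (Lemma \ref{6}) and conjugating both elements by $V$, I expect to obtain, using $V P V^* = P_0$ and $V P' V^* = I_2 - P_0 = P_1$,
$$V U_0^* a U_0 V^* = ((p - x_0) \otimes I_2)(V P_0 V^*) + (x_0 \otimes I_2) P_0, \qquad V U_0^* b U_0 V^* = ((p - x_0) \otimes I_2)(V P_0 V^*) + (x_0 \otimes I_2) P_1,$$
where the \emph{same} $V$ serves both $a$ and $b$ precisely because their Theorem \ref{10}-representations share the projection $P$. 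The new strict projection $P_{\mathrm{new}} := V P_0 V^*$ is of the form given in Corollary \ref{5}, say with a unitary $w$ off the diagonal; conjugating once more by the diagonal unitary $D = \begin{bmatrix} p & 0 \\ 0 & w^* \end{bmatrix}$ absorbs $w$ and normalizes $P_{\mathrm{new}}$ to the required real form $\begin{bmatrix} a_0^2 & a_0 (p - a_0^2)^{\frac12} \\ a_0 (p - a_0^2)^{\frac12} & p - a_0^2 \end{bmatrix}$, while leaving $P_0$ and $P_1$ fixed (a diagonal unitary commutes through these diagonal projections). Taking $U = U_0 V^* D$ then yields the asserted form.

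The step I expect to be most delicate is the role-swapping bookkeeping in the ``only if'' direction: one must verify that a single strict unitary simultaneously moves the coefficients on \emph{both} $a$ and $b$ into the correct positions, and that the subsequent diagonal conjugation normalizes $P_{\mathrm{new}}$ to the explicit real form without perturbing $P_0$ and $P_1$. No new analytic estimate is needed; the argument rests entirely on Lemma \ref{6}, Corollary \ref{5}, and the invariance of strictness and absolute compatibility under unitary conjugation.
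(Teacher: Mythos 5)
Your proposal is correct and follows essentially the same route as the paper's own proof: both directions rest on the role-swap $P = V^* P_0 V$ from Lemma \ref{6} (exploiting that the coefficients $x_0 \otimes I_2$ and $(p-x_0)\otimes I_2$ are central and hence fixed under conjugation), with Theorem \ref{9} closing the ``if'' direction, Theorems \ref{8} and \ref{10} supplying the ``only if'' representation, and Corollary \ref{5} plus a diagonal unitary (which commutes with $P_0$ and $P_1$) normalizing the strict projection to its real form. Your write-up is in fact more explicit about the unitary bookkeeping than the paper's rather terse argument, but the underlying mechanism is identical.
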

\begin{proof}
	Let $\mathcal{M}_0$ be an abelian von Neumann algebra and let $P$ be any strict projection in $M_2(\mathcal{M}_0)$. Then by Lemma \ref{6}, there exists a strict unitary $U$ in $M_2(\mathcal{M}_0)$ such that $P = U^* P_0 U$. Thus $P' = U^* P_1 U$. Hence by Theorems \ref{9} and \ref{10}, we deduce that an absolutely compatible pair of strict elements $a$ and $b$ in a von Neumann algebra $\mathcal{M}$ is given by  
	$$U^* a U = \left((p - x_0) \otimes I_2 \right) P + (x_0 \otimes I_2) P_0$$
	and 
	$$U^* b U = \left((p - x_0) \otimes I_2 \right) P + (x_0 \otimes I_2) P_1$$ 
	where $U$ is a unitary in $\mathcal{M}$, $x_0$ is a positive strict element of an abelian von Neumann subalgebra $\mathcal{M}_0$ with $p$ as its unity, $P_0 = \begin{bmatrix} 0 & 0 \\ 0 & p \end{bmatrix}$, $P_1 = \begin{bmatrix} p & 0 \\ 0 & 0 \end{bmatrix}$ and $P$ is a strict projection in $M_2(\mathcal{M}_0)$. 
	
	If $Q$ be a strict projection in $M_2(\mathcal{M}_0)$, then by Corollary \ref{5}, we have  
	$$Q = \begin{bmatrix} b_0^2 & w b_0 (p - b_0^2)^{\frac 12} \\ w^* b_0 (p - b_0^2)^{\frac 12} & p - b_0^2 \end{bmatrix} = \begin{bmatrix} w & 0 \\ 0 & 1p \end{bmatrix} \begin{bmatrix} b_0^2 & b_0 (p - b_0^2)^{\frac 12} \\ b_0 (p - b_0^2)^{\frac 12} & p - b_0^2 \end{bmatrix} \begin{bmatrix} w & 0 \\ 0 & p 	\end{bmatrix}^*$$ 
	for some strict element $b_0 \in \mathcal{M}_0^+$ and a unitary $w \in \mathcal{M}_0$. As $w$ is a unitary in $\mathcal{M}_0$, $W_0 := \begin{bmatrix} w & 0 \\ 0 & p 	\end{bmatrix}$ is a unitary in $M_2(\mathcal{M}_0)$ such that $W_o^* P_0 W_0 = P_0$ and $W_o^* P_1 W_0 = P_1$. Hence in the above said description of $a$ and $b$, we may further assume that $P = \begin{bmatrix} a_0^2 & a_0 (p - a_0^2)^{\frac 12} \\ a_0 (p - a_0^2)^{\frac 12} & p - a_0^2 \end{bmatrix}$ for some strict element $a_0 \in \mathcal{M}_0^+$.
\end{proof}
\section{Absolute compatibility in $\mathbb{M}_2$ and Poincar\'{e} sphare}

In this section, we shall discuss a geometric aspect of absolute compatibility in $\mathbb{M}_2$. Let us note that strict elements in $\mathbb{C}$ are precisely those complex numbers $z$ such that $0 < \vert z \vert < 1$. Thus a strict unitary in $\mathbb{M}_2$ is the unitary matrix $U = \begin{bmatrix} z_1 & z_2 \\ z_3 & z_4 \end{bmatrix}$ where $0 < \vert z_i \vert < 1$. Further, a strict projection in $\mathbb{M}_2$ is a projection $P = \begin{bmatrix} a & \alpha  \\ \bar{\alpha} & 1 - a \end{bmatrix}$ where $0 < a < 1$ and $\vert \alpha \vert^2 = a (1 - a)$. In other words, strict projections are precisely rank one projections other than $P_0 = \begin{bmatrix} 0 & 0 \\ 0 & 1 \end{bmatrix}$ and $P_1 = P_0' = \begin{bmatrix} 1 & 0 \\ 0 & 0 \end{bmatrix}$. We shall call $P_0$ and $P_1$ \emph{pivotal projections}. (A reason for this nomenclature will be apparent later.) In the light of this description, we reformulate the results of Section 3 in the the context of $\mathbb{M}_2$. 
\begin{theorem}\label{11}
	Let $A, B \in [0, I_2]$ where $I_2$ is the identity matrix in $\mathbb{M}_2$. Then $A$ and $B$ are strict and absolutely compatible in $\mathbb{M}_2^+$ if and only if there exist rank one projections $P$ and $Q$ in $\mathbb{M}_2$ with $P \notin \lbrace Q, Q' \rbrace$ and a rea number $\lambda$ with $0 < \lambda < 1$ such that $A = (1 - \lambda) P + \lambda Q$ and $B = (1 - \lambda) P + \lambda Q'$.
\end{theorem}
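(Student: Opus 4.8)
The plan is to read off both implications from Theorems \ref{9} and \ref{10} by specializing to the case $\mathcal{M}_0 = \mathbb{C}$, together with two facts recorded just before the statement: in $\mathbb{M}_2$ a strict projection is exactly a rank one projection different from the pivotal projections $P_0$ and $P_1$, and a strict positive scalar is exactly a real number in $(0,1)$. Throughout I will use that both strictness and absolute compatibility are invariant under unitary conjugation: for any unitary $U$ one has $|U^* X U| = U^* |X| U$, so the defining identity $|A - B| + |I_2 - A - B| = I_2$ is preserved, and the projections $s(\cdot), n(\cdot), r(\cdot)$ transform covariantly.

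For the sufficiency (``if''), suppose $A = (1 - \lambda) P + \lambda Q$ and $B = (1 - \lambda) P + \lambda Q'$ with $P, Q$ rank one projections, $P \notin \{Q, Q'\}$ and $0 < \lambda < 1$. First I would pick a unitary $W \in \mathbb{M}_2$ diagonalizing the common projection, i.e.\ $W^* P W = P_0$. Writing $\hat P := W^* Q W$, which is again a rank one projection with $W^* Q' W = \hat P'$, conjugation gives
\[ W^* A W = (1 - \lambda) P_0 + \lambda \hat P, \qquad W^* B W = (1 - \lambda) P_0 + \lambda \hat P'. \]
This is precisely the hypothesis of Theorem \ref{9} over $\mathcal{M}_0 = \mathbb{C}$, with $x_0 = \lambda$, provided $\hat P$ is a strict projection. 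Here is the one point that needs care: $\hat P = P_0$ forces $Q = P$, while $\hat P = P_1$ forces $Q = P'$ (equivalently $P = Q'$); so the assumption $P \notin \{Q, Q'\}$ is exactly what guarantees $\hat P \neq P_0, P_1$, hence $\hat P$ strict. Theorem \ref{9} then yields that $W^* A W$ and $W^* B W$ are strict and absolutely compatible, and unitary invariance transfers this back to $A$ and $B$.

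For the necessity (``only if''), suppose $A$ and $B$ are strict and absolutely compatible in $\mathbb{M}_2$. I would apply Theorem \ref{10}: the halving projection $p = 1 - r(A \circ B)$ satisfies $pH \cong (1-p)H$ with $H = \mathbb{C}^2$, which forces $\dim pH = 1$, so $K = pH$ is one dimensional and the abelian algebra $\mathcal{M}_0$ acting on $K$ is just $\mathbb{C}$. Thus $x_0$ becomes a strict scalar $\lambda \in (0,1)$ and Theorem \ref{10}(2) provides a unitary $U_0$ and a strict projection $P$ in $\mathbb{M}_2$ with
\[ U_0^* A U_0 = (1 - \lambda) P_0 + \lambda P, \qquad U_0^* B U_0 = (1 - \lambda) P_0 + \lambda P'. \]
Setting $\mathsf P := U_0 P_0 U_0^*$ and $Q := U_0 P U_0^*$, both rank one projections, and using $U_0 P' U_0^* = Q'$, I recover $A = (1 - \lambda)\mathsf P + \lambda Q$ and $B = (1 - \lambda)\mathsf P + \lambda Q'$. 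Finally $\mathsf P = Q$ would force $P_0 = P$ and $\mathsf P = Q'$ would force $P = P_1$, both excluded since $P$ is strict; hence $\mathsf P \notin \{Q, Q'\}$, completing this direction.

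The routine parts are the matrix verifications that $W^* Q W$ and $U_0 P U_0^*$ are rank one projections and that complements are preserved under conjugation; the genuinely load-bearing observations are that the $2 \times 2$ dimension collapses $\mathcal{M}_0$ to $\mathbb{C}$ (so that $x_0$ is a scalar and the operator convex combination of Theorems \ref{9} and \ref{10} becomes an ordinary convex combination), and the bookkeeping equivalence ``the off-diagonal projection is strict $\iff P \notin \{Q, Q'\}$'', which is where the exclusion clause in the statement originates. I expect the main obstacle to be keeping the two conventions aligned: Theorems \ref{9} and \ref{10} place the pivotal projection in the common summand, whereas the statement names the common rank one projection $P$ first, so that the roles of $P$ and $Q$ must be matched correctly in each direction.
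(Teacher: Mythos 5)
Your proposal is correct and follows essentially the same route as the paper's own proof: both directions reduce to Theorems \ref{9} and \ref{10} via unitary conjugation that moves the common projection to the pivotal projection $P_0$, with the observation that $P \notin \{Q, Q'\}$ is equivalent to strictness of the conjugated projection. You supply slightly more detail than the paper (the dimension-counting argument forcing $\mathcal{M}_0 = \mathbb{C}$, and the explicit unitary-invariance of strictness and absolute compatibility), but the underlying argument is identical.
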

\begin{proof}
	First we assume that $A$ and $B$ are strict and absolutely compatible in $\mathbb{M}_2^+$. Then by Theorem \ref{10}, there exist a unitary $U \in \mathbb{M}_2$, a rank one projection $R \notin \lbrace P_0, P_1 \rbrace$ and a real number $0 < \lambda < 1$ such that $U^* A U = (1 - \lambda) P_0 + \lambda R$ and $U^* B U = (1 - \lambda) P_0 + \lambda R'$. Put $P = U P_0 U^*$ and $Q = U R U^*$. Then $P$ and $Q$ are rank one projections in $\mathbb{M}_2$ with with $P \notin \lbrace Q, Q' \rbrace$ and a real number $\lambda$ with $0 < \lambda < 1$ such that $A = (1 - \lambda) P + \lambda Q$ and $B = (1 - \lambda) P + \lambda Q'$.
	
	Conversely, we assume that there exist rank one projections $P$ and $Q$ in $\mathbb{M}_2$ with $P \notin \lbrace Q, Q' \rbrace$ and a real number $\lambda$ with $0 < \lambda < 1$ such that $A = (1 - \lambda) P + \lambda Q$ and $B = (1 - \lambda) P + \lambda Q'$. Find a unitary $W \in \mathbb{M}_2$ such that $P = W^* P_0 W$. Put $R = W Q W^*$. Then $R$ is a strict projection in $\mathbb{M}_2$ as $R \notin \lbrace P_0, P_1 \rbrace$. Also we have $W A W^* = (1 - \lambda) P_0 + \lambda R$ and $W B W^* = (1 - \lambda) P_0 + \lambda R'$. Thus by Theorem \ref{9}, we have $A$ and $B$ are strict and absolutely compatible in $\mathbb{M}_2^+$. 
\end{proof}
This simplified characterization of absolute compatibility of a strict pair of elements in $\mathbb{M}_2$ leads us to a geometric interpretation. Let us recall that the set of rank one projections in $\mathbb{M}_2$ can be identified with Poincar\'{e} sphere (or Bloch sphere). Among the several identifications, we choose the real affine isomorphism 
$$\begin{bmatrix} a & \alpha \\ \bar{\alpha} & 1 - a \end{bmatrix} \mapsto \left(a, \mathfrak{Re}\alpha, \mathfrak{Im}\alpha \right)$$  
between the set $\mathcal{P}_2$ of rank one projections in $\mathbb{M}_2$ and sphere $$\mathcal{B}_d = \left\lbrace (x, y, z): \left(x - \frac 12\right)^2 + y^2 + z^2 = \left(\frac 12\right)^2 \right\rbrace$$ 
in $\mathbb{R}^3$. It follows from \cite[Remark 3.4]{JKP} that if $A$ and $B$ are strict and absolutely compatible in $\mathbb{M}_2$, then $A$ and $B$ belong to the set 
$$\mathcal{S} = \lbrace X \in \mathbb{M}_2^+: 0 < \det(X) < \frac 14 ~ \textrm{and} ~ trace(X) = 1 \rbrace.$$ 
The above said identification can be extended to $\mathcal{S}$. Under this map, $\mathcal{S}$ corresponds to the open ball 
$$\mathcal{B}^o = \left\lbrace (x, y, z): \left(x - \frac 12\right)^2 + y^2 + z^2 < \left(\frac 12\right)^2 \right\rbrace$$ 
punctured at the centre $\left( \frac 12, 0, 0 \right)$. (The centre corresponds to $\frac 12 I_2$ in $\mathbb{M}_2$ which is strict but is absolutely compatible precisely with projections in $\mathbb{M}_2$ only. That is, it does not find any absolutely compatible couple in $\mathcal{S}$.) 

It was noted in \cite{JKP} that given $A \in \mathcal{S}$, the set of absolutely couples of $A$ describes an eliptic path in $\mathcal{S}$. More precisely, let $A, X \in \mathcal{S}$ be such that $A$ is absolutely compatible with $X$. If we identify $A$ and $X$ with points in $\mathcal{B}_0$ and denote them as $A$ and $X$ again, then the locus of $X$ is a prolate spheroid with $A$ and $A' := I_2 - A$ at the foci and the major axis as the diameter joining $A$ and $A'$ \cite[Theorem 3.7]{JKP}. Theorem \ref{11} leads us to another geometric aspect of absolute compatibility in $\mathbb{M}_2$ which we describe below. 
\begin{enumerate}
	\item Let $A, B \in \mathcal{S}$ be such that $A$ is absolutely compatible with $B$. By Theorem \ref{11}, there exist $P, Q \in \mathcal{P}_2$ with $P \notin \lbrace Q, Q' \rbrace$ and a real number $\lambda$ with $0 < \lambda < 1$ such that $A = (1 - \lambda) P + \lambda Q$ and $B = (1 - \lambda) P + \lambda Q'$. Let the corresponding points in $\mathcal{B} := \mathcal{B}^0 \bigcup \mathcal{B}_d$ be denoted by the same symbols. Then the sphere, described by $A$ and $B$ as extremities of a diameter, touches $\mathcal{B}_d$ at $P$ in such a way that the points $P$, $P'$, $Q$, $Q'$, $A$ and $B$ are coplanar and the line joining $A$ and $B$ is parallel to the line joining $Q$ and $Q'$. 
	\item Conversely, assume that $P, Q \in \mathcal{B}_d$ with $P \notin \lbrace Q, Q' \rbrace$. As $Q Q'$ is a diameter of the sphere $\mathcal{B}_d$, it subtends a right angle at $P$. Let $\lambda$ be a real number with $0 < \lambda < 1$ and consider the points $A = (1 - \lambda) P + \lambda Q$ and $B = (1 - \lambda) P + \lambda Q'$. Then the line-segment $A B$ also subtends a right angle at $P$. Thus the sphere passing through $A$, $B$ and $P$ touches the sphere $\mathcal{B}_d$ at $P$ such that $A B$ is a diameter. 
	\item The sphere described by a strict pair of absolutely compatible elements in $\mathbb{M}_2$ can be determined by two parameters $P \in \mathcal{B}_d$ and a real number $\lambda$ with $0 < \lambda < 1$. This sphere is called a \emph{pivotal Poincar\'{e} sphere} of index $\lambda$ with \emph{pivot} at $P$. This is denoted by $P_\lambda$. Note that the standard Poinar\'{e} sphere is pivotal at every point on the sphere and its index is $1$.
	\item A pivotal Poincar\'{e} sphere of index $\lambda$ pivoted at $P$ of the Poincar\'{e} sphere can be defined independent of the points $A$ and $B$. Consider the point $M = (1 - \lambda) P + \lambda P'$.
	Then $P_\lambda$ is described as the sphere having $P M$ as a diameter. A high school geometric construction can used to show that $A B$ is another diameter of this sphere. In fact, the chords $P Q$ and $P Q'$ meet the sphere $P_\lambda$ at the points $A$ and $B$ respectively. (Figure.1.)
	
	{\centering
		
		\begin{tikzpicture}
			\draw (2,2) circle (3cm);
			\draw (0, 2) circle (1cm);
			\rotatebox[origin=c]{-73}{\draw[dashed] (-1.31,2.6) ellipse (3cm and .9cm);}
			\rotatebox[origin=c]{-74}{\draw[dashed] (-1.9,.6) ellipse (1cm and .3cm);}
			\draw (-1,2) -- (5,2);
			\draw (-1,2) -- (1.2,4.9) -- (3,-.8) -- (-1,2);
			\draw (-.25,2.95) -- (.35,1.05);
			\coordinate (P) at (-1,2);
			\node[left] at (P) {$P$};
			\coordinate (P') at (5,2);
			\node[right] at (P') {$P'$};
			\coordinate (Q) at (1.2,4.9);
			\node[above] at (Q) {$Q$};
			\coordinate (Q') at (3,-.8);
			\node[below] at (Q') {$Q'$};
			\coordinate (A) at (-.25,2.95);
			\node[above left] at (A) {$A$};
			\coordinate (B) at (.35,1.05);
			\node[below] at (B) {$B$}; 
			\coordinate (Q') at (3,-1.5);
			\node[below left] at (3,-1.5) {Figure.1};
		\end{tikzpicture} 
	}	
	\item Let $C$ be any point on the sphere $P_{\lambda}$ and let $D$ be the point diametrically opposite to $C$ on $P_{\lambda}$. Then there exists a unique point $R$ on $\mathcal{B}_d$ such that $C = (1 - \lambda) P + \lambda R$. Also then $D = (1 - \lambda) P + \lambda R'$ where $R'$ is the point diametrically opposite to $R$ on $\mathcal{B}_d$. Conversely, given a point $R$ in $\mathcal{B}_d$ there exists a unique pair of points $C$ and $D$ on $P_{\lambda}$ such that $C = (1 - \lambda) P + \lambda R$ and $D = (1 - \lambda) P + \lambda R'$ where $R'$ is diametrically opposite to $R$ in $\mathcal{B}_d$. 
	\item If we denote these points as the corresponding matrices in $\mathbb{M}_2$, we conclude that given a projection $P$ and a real number $\lambda$, $0 < \lambda < 1$, there exist a bijection between a pair of orthogonal projections $R$ and $R'$ (other than $P$ and $P'$) and a pair of strict, compatible pair $C$ and $D$ (lying on the pivotal poincar\'{e} sphere $P_{\lambda}$) such that $C = (1 - \lambda) P + \lambda R$ and $D = (1 - \lambda) P + \lambda R'$. (Note that in this case, the projection $P$ is the pivot.) 
	\item We expect that this geometric realization of a pair of strict and absolutely compatible pair of (positive) matrices in $\mathbb{M}_2$ may throw more light on the study of polarization of a beam of light in optics. We note that $\mathcal{P}_2$ describes the set of all pure states and the set $\mathcal{S}$ describes the set of other (mixed) states of (the von Neumann algebra) $\mathbb{M}_2$.
	
	In optics, it is observed that if the polarization states of two beams of light are orthogonal to each other, then they have the same ellipticity with orientations opposite to each other. This observation describes the nature of polarization in a free system and the corresponding states are pure, that is the points of Poincar\'{e} sphere.  We understand that this study can be extended to describe a relation between geometric properties of polarization of a pair of beams of light in a system having obstacles if their corresponding states are absolutely compatible to each other. The author is in an advanced stage to perform a suitable experiment in collaboration with his colleagues working in Optics. 
\end{enumerate}

\thanks{{\bf Acknowledgements}: The author is grateful to Antonio M. Peralta for expressing his belief that absolute compatibility must be closely related to projections. This research was partially supported by Science and Engineering Research Board, Department of Science and Technology, Government of India sponsored  Mathematical Research Impact Centric Support project (reference no. MTR/2020/000017).}

\end{document}